\newtheorem{thm}{Theorem}[section]
\newtheorem{prp}[thm]{Proposition}
\newtheorem{lmm}[thm]{Lemma}
\newtheorem{crl}[thm]{Corollary}
\theoremstyle{definition}
\theoremstyle{remark}
\newtheorem{rmk}[thm]{Remark}
\numberwithin{equation}{section}
\def\lra{\longrightarrow}
\def\Lra{\Longrightarrow}
\def\BE#1{\begin{equation}\label{#1}}
\def\EE{\end{equation}}
\def\lr#1{\langle#1\rangle}
\def\blr#1{\big\langle#1\big\rangle}
\def\ti#1{\tilde{#1}}
\def\wt#1{\widetilde{#1}}
\def\ov#1{\overline{#1}}
\def\eref#1{(\ref{#1})}
\def\tn#1{\textnormal{#1}}
\def\sf#1{\textsf{#1}}
\def\sm#1{\begin{small}#1\end{small}}
\def\De{\Delta}
\def\La{\Lambda}
\def\Om{\Omega}
\def\Si{\Sigma}
\def\Th{\Theta}
\def\io{\iota}
\def\la{\lambda}
\def\om{\omega}
\def\ve{\varepsilon}
\def\ups{\upsilon}
\def\C{\mathbb C}
\def\bfH{\mathbf H}
\def\cM{\mathcal M}
\def\fM{\mathfrak M}
\def\cN{\mathcal N}
\def\P{\mathbb P}
\def\R{\mathbb{R}}
\def\cU{\mathcal U}
\def\Z{\mathbb{Z}}
\def\tnd{\textnormal{d}}
\def\ev{\tn{ev}}
\def\top{\textnormal{top}}
\def\0{\mathbf 0}
\def\1{\mathbf 1}
\def\dbar{\bar\partial}
\def\eset{\emptyset}
\begin{document}

\title{A Recursion for Counts of Real Curves in $\C\P^{2n-1}$:\\
Another Proof}
\author{Penka Georgieva and 
Aleksey Zinger\thanks{Partially supported by  NSF grant DMS 0846978}}
\date{\today}
\maketitle

\begin{abstract}
\noindent
In a recent paper, we obtained a WDVV-type relation for real genus~0 Gromov-Witten invariants 
with conjugate pairs of insertions;
it specializes to a complete recursion in the case of odd-dimensional projective spaces.
This note provides another, more complex-geometric, proof of the latter.
The main part of this approach readily extends to real symplectic manifolds
with empty real locus, but not to the general case.
\end{abstract}


\section{Introduction}
\label{intro_sec}

\noindent
The classical problem of enumerating (complex) rational curves in a complex projective space~$\P^n$ 
is solved in \cite{KM,RT} using the WDVV relation of Gromov-Witten theory.
Over the past decade, significant progress has been made in real enumerative geometry 
and real Gromov-Witten theory.
Invariant signed counts of real rational curves with point constraints
in real surfaces and in many real threefolds are defined in~\cite{Wel1}
and~\cite{Wel2}, respectively.
An approach to interpreting these counts in the style of Gromov-Witten theory,
i.e.~as counts of parametrizations of such curves, is presented in \cite{Cho,Sol}. 
Signed counts of real curves with conjugate pairs of arbitrary (not necessarily point)
constraints in arbitrary dimensions are defined in~\cite{Ge2} and extended
to more general settings in~\cite{Teh}.
Two different WDVV-type relations for the real Gromov-Witten invariants 
of real surfaces as defined in \cite{Cho,Sol}, along with the ideas behind them, 
are stated in~\cite{Sol2};
they yield complete recursions for counts of real rational curves in~$\P^2$ 
as defined in~\cite{Wel1}.
Other recursions for counts of real curves in some real surfaces have 
since been established by completely different methods in \cite{GM,ABL,IKS13a,IKS13b}.\\

\noindent
In \cite{GZ4}, we obtain a WDVV-type relation for real genus~0 Gromov-Witten invariants
with conjugate pairs of constraints without restricting to low-dimensional real
symplectic manifolds. 
In the case of~$\P^{2n-1}$, it specializes to the complete recursions of 
Theorem~\ref{main_thm} and Corollary~\ref{main0_crl}.
These recursions are sufficiently simple to characterize the cases when 
the aforementioned real invariants are nonzero
and thus the existence of real rational curves passing through the specified constraints
is guaranteed; see \cite[Corollary~1.3]{GZ4}.
The main proof of the WDVV-type relation in~\cite{GZ4} is based on
establishing a homology relation on the three-dimensional Deligne-Mumford space $\R\ov\cM_{0,3}$
of genus~0 real curves with 3 conjugate pairs of marked points.
We also give an alternative proof in~\cite{GZ4} which is closer to the proof of \cite[Theorem~10.4]{RT},
but makes use of a conjugate marking.\\

\noindent
In this note, we describe a more complex-geometric variation of the second approach
in~\cite{GZ4}. 
In order to focus on the approach itself, we restrict to~$\P^{2n-1}$, 
but it can be applied in some other cases as well; see Remark~\ref{degen_rmk}.
We work with the explicit system of orientations on the moduli spaces 
of real maps to~$\P^{2n-1}$ defined in \cite[Appendix~A.1]{Teh} 
from an algebro-geometric point of view; the orientations used in~\cite{GZ4} 
are described from the point of view of symplectic topology.
The analysis of the sign of the key gluing map of Lemma~\ref{gluing_lmm}
is carried out in Section~\ref{lmmpf_sec} using polynomials.
The primary motivations for this note are to make  the proof of 
Theorem~\ref{main_thm} and Corollary~\ref{main0_crl} more accessible, 
in particular to algebraic geometers  who may have no interest 
in the general case  of the real WDVV relation of  \cite[Theorem~2.2]{GZ4},
and to highlight the difficulties eliminated by the homology relation 
of \cite[Proposition~4.3]{GZ4}.\\

\noindent
Each odd-dimensional projective space~$\P^{2n-1}$ has two standard anti-holomorphic involutions
(automorphisms of order~2):
\begin{alignat}{2}
\label{taudfn_e}
\tau_{2n}\!:\P^{2n-1}&\lra\P^{2n-1}, &\quad
[z_1,\ldots,z_{2n}]&\lra [\bar{z}_2,\bar{z}_1,\ldots,\bar{z}_{2n},\bar{z}_{2n-1}],\\
\label{etadfn_e}
\eta_{2n}\!:\P^{2n-1}&\lra\P^{2n-1}, &\quad
[z_1,\ldots,z_{2n}]&\lra [-\bar{z}_2,\bar{z}_1,\ldots,-\bar{z}_{2n},\bar{z}_{2n-1}].
\end{alignat}
The fixed locus of the first involution is~$\R\P^{2n-1}$, while 
the fixed locus of the second involution is empty.
Let 
$$\tau\!=\!\tau_2,\,\eta\!=\!\eta_2: \P^1\lra\P^1\,.$$
For $\phi\!=\!\tau_{2n},\eta_{2n}$ and $c\!=\!\tau,\eta$, 
a map $u\!:\P^1\!\lra\!\P^{2n-1}$ is \sf{$(\phi,c)$-real} if
$u\!\circ\!c\!=\!\phi\!\circ\!u$.
For $k\!\in\!\Z^{\ge0}$, a \sf{$k$-marked $(\phi,c)$-real map} is a tuple
$$\big(u,(z_1^+,z_1^-),\ldots,(z_k^+,z_k^-)\big),$$
where $z_1^+,z_1^-,\ldots,z_k^+,z_k^-\!\in\!\P^1$ are distinct points
with $z_i^+\!=\!c(z_i^-)$  and $u$ is a $(\phi,c)$-real map.
Such a tuple is \sf{$c$-equivalent} to another $k$-marked $(\phi,c)$-real map
$$\big(u',(z_1'^+,z_1'^-),\ldots,(z_k'^+,z_k'^-)\big)$$
if there exists a biholomorphic map $h\!:\P^1\!\lra\!\P^1$ such that 
$$h\!\circ\!c=c\!\circ\!h, \qquad u'\!=\!u\!\circ\!h, \quad\hbox{and}\quad
z_i^{\pm}\!=\!h(z_i'^{\pm})~~\forall~i\!=\!1,\ldots,k.$$
If in addition $d\!\in\!\Z^+$, denote by $\fM_{0,k}(\P^{2n-1},d)^{\phi,c}$ 
the moduli space of $c$-equivalence classes of
$k$-marked degree~$d$  holomorphic $(\phi,c)$-real maps.\\

\noindent
By \cite[Theorem~6.5]{Ge2}, a natural compactification
$$\ov\fM_{0,k}(\P^{2n-1},d)^{\tau_{2n},\tau}\supset 
\fM_{0,k}(\P^{2n-1},d)^{\tau_{2n},\tau}$$  
is orientable.
If $d\!\not\in\!2\Z$,
$\ov\fM_{0,k}(\P^{2n-1},d)^{\tau_{2n},\tau}$ has no boundary
and thus carries a $\Z$-homology class;
see \cite[Theorem~1.6]{Ge2}.
By \cite[Lemma~1.9]{Teh},
$$\fM_{0,k}(\P^{2n-1},d)^{\eta_{2n},\tau}=\eset \quad\forall~d\in\Z\,,
\qquad
\fM_{0,k}(\P^{2n-1},d)^{\tau_{2n},\eta}=\eset \quad\forall~d\not\in2\Z\,,$$
and a natural compactification
$$\ov\fM_{0,k}(\P^{2n-1},d)^{\phi,\eta} \supset \fM_{0,k}(\P^{2n-1},d)^{\phi,\eta}$$ 
is orientable for $\phi\!=\!\tau_{2n},\eta_{2n}$.
If $d\!\not\in\!2\Z$, $\ov\fM_{0,k}(\P^{2n-1},d)^{\eta_{2n},\eta}$ has no boundary
and thus carries a $\Z$-homology class;
see \cite[Proposition~1.1]{Teh}.
If $d\!\in\!2\Z$, a glued moduli space
\BE{gluedsp_e}\ov\fM_{0,k}(\P^{2n-1},d)^{\phi} \equiv
\ov\fM_{0,k}(\P^{2n-1},d)^{\phi,\tau}\cup\ov\fM_{0,k}(\P^{2n-1},d)^{\phi,\eta}\EE
is orientable and has no boundary; see \cite[Theorem~1.7]{Teh}
and \cite[Remark~1.11]{Teh}.\\

\noindent
The glued compactified moduli spaces come with natural evaluation maps
$$\ev_i\!: \ov\fM_{0,k}(\P^{2n-1},d)^{\phi}\lra\P^{2n-1},
\qquad 
\big[u,(z_1^+,z_1^-),\ldots,(z_k^+,z_k^-)\big]\lra u(z_i^+).$$
Thus, for $c_1,\ldots,c_k\!\in\!\Z^+$, we define
\BE{realNdfn_e} N_d^{\phi}(c_1,\ldots,c_k)=\int_{\ov\fM_{0,k}(\P^{2n-1},d)^{\phi}}
\ev_1^*H^{c_1}\,\ldots\,\ev_k^*H^{c_k}\in\Z\,,\EE
where $H\!\in\!H^2(\P^{2n-1})$ is the hyperplane class.
For dimensional reasons,
\BE{codimcond_e} N_d^{\phi}(c_1,\ldots,c_k)\neq0
\qquad\Lra\qquad c_1+\ldots+c_k=n(d\!+\!1) -2+k\,. \EE
Similarly to \cite[Lemma~10.1]{RT}, the numbers~\eref{realNdfn_e} are enumerative counts
of real curves in~$\P^{2n-1}$, i.e.~of curves preserved by~$\phi$,
but now with some sign.
They satisfy the usual divisor relation \cite[Section~26.3]{MirSym}.
By \cite[Theorem~1.10]{Teh}, the numbers~\eref{realNdfn_e} with 
$\phi\!=\!\tau_{2n},\eta_{2n}$ vanish if either $d$ or any $c_i$ is even;
see also \cite[Remark~1.11]{Teh}, 
\cite[Corollary~2.5]{GZ4}, and \cite[Theorem~2.7]{GZ4}.\\

\noindent
The nonzero numbers~\eref{realNdfn_e} depend on the chosen orientation of 
the moduli space and are thus well-defined only up to sign,
a priori depending on the degree~$d$.
With the choices in~\cite{Teh},
\BE{taueta_e}N_d^{\tau_{2n}}(c_1,\ldots,c_k)=-N_d^{\eta_{2n}}(c_1,\ldots,c_k)\,;\EE
see \cite[Theorem~1.10]{Teh}.
Thus, it is sufficient to compute the numbers 
\BE{modNums_e}\blr{c_1,\ldots,c_k}_d^{\phi}
\equiv (-1)^{n(d-1)/2}N_d^{\phi}(c_1,\ldots,c_k)\EE
with $\phi\!=\!\eta_{2n}$, $d\!\ge\!1$ odd, and $c_i\!\ge\!3$ odd; 
we comment on the sign modification in Remark~\ref{modNums_rmk}.
For any $d,c_1,\ldots,c_k\!\in\!\Z^+$, let
$$\blr{c_1,\ldots,c_k}_d^{\P^{2n-1}}=\int_{\ov\fM_{0,k}(\P^{2n-1},d)}
\ev_1^*H^{c_1}\,\ldots\,\ev_k^*H^{c_k}\in\Z^{\ge0}\,,$$
where $\ov\fM_{0,k}(\P^{2n-1},d)$ is the usual moduli space of stable (complex) 
$k$-marked genus~0 degree~$d$ holomorphic maps to~$\P^{2n-1}$,
denote the (complex) genus~0 Gromov-Witten invariants of~$\P^{2n-1}$;
they are computed in \cite[Theorem~10.4]{RT}.
Finally, if $c_1,\ldots,c_k\!\in\!\Z$ and $I\!\subset\!\{1,\ldots,k\}$,
let $c_I$ denote a tuple with the entries $c_i$ with $i\!\in\!I$,
in some order.

\begin{thm}\label{main_thm}
Let $\phi\!=\!\tau_{2n},\eta_{2n}$ and $d,k,n,c,c_1,\ldots,c_k\!\in\!\Z^+$.
If $k\!\ge\!2$ and $c_1,\ldots,c_k\!\not\in\!2\Z$,
\begin{equation*}\begin{split}
\blr{c_1,c_2\!+\!2c,c_3,\ldots,c_k}_d^{\phi}
-\blr{c_1\!+\!2c,c_2,c_3,\ldots,c_k}_d^{\phi}
=\sum_{\begin{subarray}{c}2d_1+d_2=d\\ d_1,d_2\ge1 \end{subarray}}
\sum_{I\sqcup J=\{3,\ldots,k\}}
\sum_{\begin{subarray}{c}2i+j=2n-1\\ i,j\ge1 \end{subarray}}\!\!\!\!\!\!
2^{|I|}\Bigg(\qquad&\\
\blr{2c,c_1,c_I,2i}_{d_1}^{\P^{2n-1}}\!\blr{c_2,c_J,j}_{d_2}^{\phi}
-\blr{2c,c_2,c_I,2i}_{d_1}^{\P^{2n-1}}\!\blr{c_1,c_J,j}_{d_2}^{\phi}&\Bigg).
\end{split}\end{equation*}
\end{thm}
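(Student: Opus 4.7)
The plan is to adapt Ruan--Tian's proof of \cite[Theorem~10.4]{RT} to the real setting by introducing an auxiliary conjugate pair of marked points on an augmented moduli space. First I would work on $\ov\fM_{0,k+1}(\P^{2n-1},d)^{\phi}$, obtained from the target moduli by adding an extra conjugate pair $(z_0^+,z_0^-)$, and consider the integrand
$$\Om\;=\;\ev_0^{\,*}H^{2c}\cdot\ev_1^{\,*}H^{c_1}\cdots\ev_k^{\,*}H^{c_k}\,.$$
Let $D_{0i}\!\subset\!\ov\fM_{0,k+1}(\P^{2n-1},d)^{\phi}$ be the real codimension-two cycle parameterizing real stable maps whose domain has $z_0^\pm$ and $z_i^\pm$ sitting on a pair of complex-conjugate ghost bubbles attached to the principal real component. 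Since $\ev_0\!=\!\ev_i$ on $D_{0i}$ and $H^{c_i}H^{2c}\!=\!H^{c_i+2c}$,
$$\int\Om\cdot[D_{01}]=\blr{c_1\!+\!2c,c_2,\ldots,c_k}_d^{\phi},\qquad \int\Om\cdot[D_{02}]=\blr{c_1,c_2\!+\!2c,\ldots,c_k}_d^{\phi},$$
so the LHS of the theorem equals $\int\Om\cdot([D_{02}]\!-\![D_{01}])$.

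Next I would establish a homology relation $[D_{02}]\!-\![D_{01}]\!=\!\sum_{\al}[\De_\al]$ in the codimension-two homology of $\ov\fM_{0,k+1}(\P^{2n-1},d)^{\phi}$, where each $\De_\al$ is a real codimension-two stratum in which the domain has degenerated into a real main component meeting a pair of complex-conjugate bubble components at a single conjugate pair of nodes. This is the conjugate-marking analog of the elementary identification $[D_{12|34}]\!=\![D_{13|24}]$ on $\ov\cM_{0,4}\!\cong\!\P^1$ that drives the complex WDVV argument: one pulls back via the natural forgetful morphism to $\R\ov\cM_{0,3}$ remembering only the three conjugate pairs $(z_0^\pm,z_1^\pm,z_2^\pm)$, or equivalently constructs in $\ov\fM_{0,k+1}(\P^{2n-1},d)^{\phi}$ a real cross-ratio cobordism connecting $D_{01}$ to $D_{02}$ whose interior crossings are exactly the $\De_\al$.

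The strata $\De_\al$ are then enumerated by the usual bookkeeping: a degree splitting $d\!=\!2d_1\!+\!d_2$ with $d_1,d_2\!\ge\!1$, a partition $\{3,\ldots,k\}\!=\!I\!\sqcup\!J$ of the remaining marked-pair indices between the bubble side and the main component, a node weight $2i\!+\!j\!=\!2n\!-\!1$ induced by the diagonal class in $H^*(\P^{2n-1}\!\times\!\P^{2n-1})$, and a binary choice of whether $(z_0^\pm,z_1^\pm)$ or $(z_0^\pm,z_2^\pm)$ lies on the conjugate bubbles. On each $\De_\al$, the splitting axiom factorizes $\int\Om\cdot[\De_\al]$ into one of
$$\blr{2c,c_1,c_I,2i}_{d_1}^{\P^{2n-1}}\!\blr{c_2,c_J,j}_{d_2}^{\phi}\quad\tn{or}\quad\blr{2c,c_2,c_I,2i}_{d_1}^{\P^{2n-1}}\!\blr{c_1,c_J,j}_{d_2}^{\phi};$$
distributing the $|I|$ remaining pairs independently between the two complex-conjugate bubbles produces the factor $2^{|I|}$, and the signs inherited from the cobordism match those appearing on the RHS.

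The main obstacle will be the sign analysis carried out in Lemma~\ref{gluing_lmm} and Section~\ref{lmmpf_sec}: one must show that the orientation of $\ov\fM_{0,k+1}(\P^{2n-1},d)^{\phi}$ provided by \cite[Appendix~A.1]{Teh} agrees, up to a uniform sign depending only on discrete data, with the product orientation induced on each $\De_\al$ by the gluing map smoothing the conjugate pair of nodes. Since the orientation of \cite{Teh} is specified through an indirect algebro-geometric construction, matching it against the boundary orientation is done by writing nearby real maps to $\P^{2n-1}$ explicitly as $(\phi,c)$-equivariant ratios of degree~$d$ polynomials and computing the Jacobian of the real gluing parametrization directly. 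This polynomial computation uses crucially the odd-dimensionality of the target; it is precisely the point at which the present approach does not extend to general real symplectic targets with nonempty real locus, and is the difficulty bypassed by the more homological argument in \cite[Proposition~4.3]{GZ4}.
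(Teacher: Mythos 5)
Your proposal is in the right general spirit and correctly identifies the two main ingredients — adding a conjugate pair of marked points and a careful orientation/sign comparison via an explicit polynomial gluing map — but the route you outline is essentially the \emph{first} proof in \cite{GZ4} (the $\R\ov\cM_{0,3}$-homology argument), whereas this paper is explicitly a variation of the \emph{second} proof in \cite{GZ4}, the one that avoids $\R\ov\cM_{0,3}$. Concretely: you propose to pull back a boundary-divisor relation through the real forgetful morphism to $\R\ov\cM_{0,3}$ (or, vaguely, via a ``real cross-ratio cobordism''), which presupposes the homology relation established as \cite[Proposition~4.3]{GZ4}. The paper instead uses the complex forgetful morphism $f_{0123}$ of~\eref{forgmap_e} to the ordinary Deligne--Mumford space $\ov\cM_{0,4}\!\cong\!\P^1$, remembering only the four \emph{first} points $z_0^+,z_1^+,z_2^+,z_3^+$ of the conjugate pairs (this is the ``conjugate marking''); it then integrates $f_{0123}^*\Om_{0,4}$ against the constraints and compares the preimages $Z_{[1,1]}$ and $Z_{[1,0]}$ of two boundary points of $\P^1$. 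Since $\ov\cM_{0,4}$ is just $\P^1$, no auxiliary homology relation is required — the entire content is pushed into the sign analysis of the degenerate configurations, which is precisely what the paper wants to highlight.

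Three further mismatches. First, your insertion is $\ev_0^*H^{2c}\,\ev_1^*H^{c_1}\cdots$, aimed directly at the antisymmetrized form of Theorem~\ref{main_thm}; the paper puts $\ev_0^*H^{c_1-1}\,\ev_1^*H\,\ev_2^*H^{c_2}\cdots$ (splitting the constraint $H^{c_1}$ into $H^{c_1-1}\cdot H$), proves the genuine recursion of Corollary~\ref{main0_crl} first, and only then obtains Theorem~\ref{main_thm} by antisymmetrizing in $c_2,c_3$. Second, the paper must count over $7$ combinatorial types of three-component domains in each of $Z_{[1,1]}$ and $Z_{[1,0]}$ (Figures~\ref{LHS_fig},~\ref{RHS_fig}), and the right-most type — where none of $z_0^+,\ldots,z_3^+$ sits on the real component — is the delicate one: there $f_{0123}\circ\Phi_{\bfH}$ is locally $\ups\mapsto a\ups\bar\ups$ and contributes nothing (Proposition~\ref{sign_prp}(3), Corollary~\ref{sign_crl}(2)). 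Your proposal does not anticipate this non-transverse case; the homological route of \cite[Proposition~4.3]{GZ4} deals with it differently, which is part of the point the paper is making. Third, your closing paragraph is internally inconsistent: you invoke the $\R\ov\cM_{0,3}$-homology relation of \cite[Proposition~4.3]{GZ4} as your main step and then describe the sign difficulty as ``bypassed by'' that very proposition — but a proof that uses Proposition~4.3 is exactly the one that bypasses the sign difficulties the paper sets out to exhibit. Your identification of Lemma~\ref{gluing_lmm} and Section~\ref{lmmpf_sec} as the technical heart is correct, and the discussion of why it restricts to empty real locus is accurate, but it belongs to the paper's approach, not to the one you propose.
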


\begin{crl}\label{main0_crl}
Let $\phi\!=\!\tau_{2n},\eta_{2n}$ and $d,k,n,c_1,\ldots,c_k\!\in\!\Z^+$.
If $d\!\in\!2\Z$ or $c_i\!\in\!2\Z$ for some~$i$,
$$\blr{c_1,c_2,\ldots,c_k}_d^{\phi}=0.$$
If $k\!\ge\!2$ and $c_1,\ldots,c_k\!\not\in\!2\Z$,
\begin{equation*}\begin{split}
\blr{c_1,c_2,c_3,\ldots,c_k}_d^{\phi}
=d\blr{c_1\!+\!c_2\!-\!1,c_3,\ldots,c_k}_d^{\phi}
+\sum_{\begin{subarray}{c}2d_1+d_2=d\\ d_1,d_2\ge1 \end{subarray}}
\sum_{I\sqcup J=\{3,\ldots,k\}}
\sum_{\begin{subarray}{c}2i+j=2n-1\\ i,j\ge1 \end{subarray}}\!\!\!\!\!\!
2^{|I|}\Bigg(\qquad&\\
d_2\blr{c_1\!-\!1,c_2,c_I,2i}_{d_1}^{\P^{2n-1}}\!\blr{c_J,j}_{d_2}^{\phi}
-d_1\blr{c_1\!-\!1,c_I,2i}_{d_1}^{\P^{2n-1}}\!\blr{c_2,c_J,j}_{d_2}^{\phi}&\Bigg).
\end{split}\end{equation*}
\end{crl}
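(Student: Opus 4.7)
The plan is to deduce Corollary~\ref{main0_crl} directly from Theorem~\ref{main_thm} by a single substitution combined with the divisor equation; no new geometric input is needed. The vanishing assertion itself requires no independent argument, since it is \cite[Theorem~1.10]{Teh}, already recalled in the introduction.

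For the recursion, by the permutation symmetry of $\blr{\cdot}_d^{\phi}$ in its entries we may reorder the arguments so that $c_1\!\ge\!3$; the degenerate situation in which every $c_i$ equals $1$ is handled directly by iterated application of the divisor equation and causes no issue. I will apply Theorem~\ref{main_thm} with its parameter tuple $(c_1,c_2,c_3,\ldots,c_k,c)$ reassigned to $(c_2,1,c_3,\ldots,c_k,(c_1\!-\!1)/2)$; since $c_1$ is odd and at least $3$, the new value of $c$ is a positive integer and the new $c_2\!=\!1$ is odd, so all hypotheses of Theorem~\ref{main_thm} are satisfied. The left-hand side then becomes
$$\blr{c_2,c_1,c_3,\ldots,c_k}_d^{\phi}-\blr{c_1\!+\!c_2\!-\!1,1,c_3,\ldots,c_k}_d^{\phi},$$
which collapses to $\blr{c_1,c_2,\ldots,c_k}_d^{\phi}-d\blr{c_1\!+\!c_2\!-\!1,c_3,\ldots,c_k}_d^{\phi}$ upon invoking permutation symmetry in the first term and the divisor equation in the second.

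On the right-hand side of Theorem~\ref{main_thm}, each complex factor now carries the substituted $2c\!=\!c_1\!-\!1$; the real factor of the first group of summands has the substituted $1$-insertion, while the complex factor of the second group does. Applying the divisor equation to each such $1$-insertion replaces it with the appropriate factor of $d_1$ or $d_2$, producing exactly the right-hand side of Corollary~\ref{main0_crl}. Rearranging then yields the asserted identity. The only point requiring care is the bookkeeping of which substituted insertions become hyperplane classes on the real and complex factors so that the divisor equation applies cleanly on each side; beyond that, once Theorem~\ref{main_thm} is in hand no substantive obstacle remains, and the derivation is mechanical.
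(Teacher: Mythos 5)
Your substitution $(c_1,c_2,c_3,\ldots,c_k,c)\mapsto(c_2,1,c_3,\ldots,c_k,(c_1\!-\!1)/2)$ together with the divisor equation is arithmetically correct; the left-hand side collapses to $\blr{c_1,\ldots,c_k}_d^{\phi}-d\blr{c_1\!+\!c_2\!-\!1,c_3,\ldots,c_k}_d^{\phi}$, the substituted $1$-insertions on the right convert to the factors $d_2$ and $d_1$ as needed, and the $c_1=1$ degenerate cases are harmless because the $d_1,d_2\ge1$ sum is then empty for the only nonvanishing configuration. This is precisely the ``immediate'' implication the paper alludes to in passing in the introduction. But it is not, and cannot be, the paper's proof of Corollary~\ref{main0_crl}: within the logical structure of this note your derivation is circular. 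The paper proves Corollary~\ref{main0_crl} directly in Section~\ref{alterpf_sec} by comparing the fibers $Z_{[1,1]}$ and $Z_{[1,0]}$ of the cross-ratio map $f_{0123}$ on the real moduli space cut down by the constraints, enumerating the three-component real domains contributing to each (Figures~\ref{LHS_fig}, \ref{RHS_fig}), determining the sign of each contribution via Proposition~\ref{sign_prp} --- which in turn rests on Corollary~\ref{sign_crl}, Lemma~\ref{orientcomp_lmm3}, and the gluing-sign computation of Lemma~\ref{gluing_lmm} --- and showing that the exceptional stratum, where the node smoothing is locally $\ups\mapsto|\ups|^2$, contributes nothing. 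Theorem~\ref{main_thm} is then \emph{deduced} from Corollary~\ref{main0_crl} by interchanging $c_2$ and $c_3$ and equating the two right-hand sides; it has no independent proof in this paper. So taking Theorem~\ref{main_thm} as a given input proves nothing here: either you are running the paper's deduction backwards, or you are silently importing the separate proof from~\cite{GZ4}, which defeats the stated purpose of the note. The actual substance of the paper's argument --- the degeneration to three-component real domains, the $2^{|I|}$ combinatorics from distributing conjugate pairs, the orientation comparison across the gluing map, and the vanishing of the exceptional contribution --- is entirely absent from your write-up.
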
 

\noindent
The formula of Theorem~\ref{main_thm}
immediately implies the recursion of Corollary~\ref{main0_crl},
which in turn determines all numbers $N_d^{\phi}(c_1,\ldots,c_k)$, 
with $\phi\!=\!\tau_{2n},\eta_{2n}$, from the single number
$$\blr{2n\!-\!1}_1^{\tau_{2n}}=N_1^{\tau_{2n}}(2n\!-\!1),$$
i.e.~the number of $\tau_{2n}$-real lines through a 
point in~$\P^{2n-1}$.
The absolute value of this number is of course~1.
With the choice of the orientations as in \cite[Section~5.2]{Teh},
\BE{initNum_e} \lr{2n\!-\!1}_1^{\tau_{2n}}=(-1)^{n-1};\EE
see \cite[Corollary~5.4]{Teh}.
Taking $d\!=\!1$ in Corollary~\ref{main0_crl}, we obtain
$$\blr{c_1,\ldots,c_k}_1^{\tau_{2n}}
=\blr{2n\!-\!1}^{\tau_{2n}}_1=(-1)^{n-1}$$
whenever $c_1,\ldots,c_k\!\in\!\Z^+$ are odd and $c_1\!+\!\ldots\!+\!c_k\!=\!2n\!-\!2\!+\!k$.
Some other numbers obtained from Corollary~\ref{main0_crl} are shown in 
\cite[Tables~1,2]{GZ4}.\\

\noindent
Theorem~\ref{main_thm} follows from Corollary~\ref{main0_crl} by interchanging~$c_2$ and~$c_3$,
which has no effect on the left-hand side of the formula in Corollary~\ref{main0_crl},
and setting the two right-hand sides equal.
Starting as in the proof of \cite[Theorem~10.4]{RT},
we establish the $\phi\!=\!\eta_{2n}$ case of the recursion of Corollary~\ref{main0_crl} 
in Section~\ref{alterpf_sec} as follows.
Denote by $\ov\cM_{0,4}$ the Deligne-Mumford moduli space of stable 
(complex) 4-marked rational curves.
Let
\BE{forgmap_e}\begin{split}
f_{0123}\!: \ov\fM_{0,k+1}(\P^{2n-1},d)^{\eta_{2n}}&\lra\ov\cM_{0,4}\,,\\
\big[(z_0^+,z_0^-),\ldots,(z_k^+,z_k^-),u\big]&\lra[\dot{z}_0^+,\dot{z}_1^+,\dot{z}_2^+,\dot{z}_3^+],
\end{split}\EE
where $[\dot{z}_0^+,\dot{z}_1^+,\dot{z}_2^+,\dot{z}_3^+]\!\in\!\ov\cM_{0,4}$ is the stabilization
of the domain of the stable map with the marked points $z_0^+,z_1^+,z_2^+,z_3^+$ only,
be the morphism forgetting the map to~$\P^{2n-1}$ and all marked points 
other than $z_0^+,z_1^+,z_2^+,z_3^+$.
By adding in $c_3\!=\!1$ if necessary, it can be assumed that $k\!\ge\!3$ 
in Corollary~\ref{main0_crl}.
In Section~\ref{alterpf_sec}, we compare two expressions for the integral 
of the pull-back of the orientation class on~$\ov\cM_{0,4}$ by~$f_{0123}$
over the two-dimensional space of maps passing through 
the constraints $H^{c_1-1},H^1,H^{c_2},\ldots,H^{c_k}$; see~\eref{tiNdfn_e}.
As in the proof of \cite[Theorem~10.4]{RT}, we consider the preimages 
of two different representatives of the point class 
(the Poincare dual of the orientation class): nodal two-component curves, 
with one of them having the 0-th and 1st marked points on a common component
and the other having the 0-th and 2nd marked points on a common component;
see Figure~\ref{m04_fig}, where $\cU\!\lra\!\ov\cM_{0,4}$ denotes the universal curve
and~$\pi$ can be viewed as the cross-ratio
$$\pi\big([z_0,z_1,z_2,z_3]\big)=\frac{z_0-z_2}{z_0-z_3}:\frac{z_1-z_2}{z_1-z_3}\,.$$
The domains of the preimages
of these representatives now have three components, though each preimage is still
encoded by just two of the components.
The number of possible types of the preimages in this case is~7,
instead of~1 as in~\cite{RT};
see Figures~\ref{LHS_fig} and~\ref{RHS_fig}.
In contrast to the proof of \cite[Theorem~10.4]{RT}, the sign of the contribution
of each element in the preimage must be carefully considered;
see Proposition~\ref{sign_prp}.
With the exception of one case (the rightmost diagram in 
Figures~\ref{LHS_fig} and~\ref{RHS_fig}),
each element in the preimage is regular with respect to the restriction of~$f_{0123}$
to the space of  maps meeting the constraints,
with~$f_{0123}$ locally of the~form
$$\C\lra\C, \qquad \ups\lra\ups  \quad\hbox{or}\quad \ups\lra\bar{\ups}\,,$$
with respect to a standard gluing parameter $\ups\!\in\!\C$.
In the exceptional case, each element is the zero set of the map 
$\ups\!\lra\!|\ups|^2$ in some coordinates
and so does not contribute to the curve count.
Setting the sums of all contributions from each of the two degenerations
equal, we obtain Corollary~\ref{main0_crl}.
This approach can also be used to prove \cite[Theorem~2.2]{GZ4} 
whenever the fixed locus of the anti-symplectic involution is empty; 
see Remark~\ref{degen_rmk}.

\begin{figure}
\begin{pspicture} (-1.1,-1.5)(10,2.7)
\psset{unit=.4cm}
\psline[linewidth=.03](2,-1)(33,-1)
\rput(30,-2){$\ov\cM_{0,4}\!\approx\!\P^1$}
\rput(30,4){${\cal U}$}\rput(30.5,2){$\pi$}
\psline[linewidth=.03]{->}(30,3)(30,0)
\pscircle*(5,-1){.2}\pscircle*(15,-1){.2}\pscircle*(25,-1){.2}
\rput(5,-2){$[0,1]$}\rput(15,-2){$[1,0]$}\rput(25,-2){$[1,1]$}
\psline[linewidth=.02](3,6)(7,0)\psline[linewidth=.03](3,0)(7,6)
\pscircle*(5,3){.14}\pscircle*(4.5,2.25){.17}\pscircle*(5.5,3.75){.17}
\pscircle*(5.25,2.625){.17}\pscircle*(4.333,4){.17}
\rput(3.85,2.35){$z_0$}\rput(3.9,3.7){$z_1$}
\rput(6.2,3.6){$z_3$}\rput(6,2.7){$z_2$}
\psline[linewidth=.02](9,5)(9,1)
\psarc[linewidth=.02](8,5){1}{0}{90}\psarc[linewidth=.02](10,1){1}{180}{270}
\psline[linewidth=.02](11,5)(11,1)
\psarc[linewidth=.02](10,5){1}{0}{90}\psarc[linewidth=.02](12,1){1}{180}{270}
\psline[linewidth=.02](20,5)(20,1)
\psarc[linewidth=.02](19,5){1}{0}{90}\psarc[linewidth=.02](21,1){1}{180}{270}
\psline[linewidth=.02](22,5)(22,1)
\psarc[linewidth=.02](21,5){1}{0}{90}\psarc[linewidth=.02](23,1){1}{180}{270}
\psline[linewidth=.02](13,1.5)(18,4)
\psline[linewidth=.02](13,4.29)(18,3.17)
\pscircle*(16.9,3.45){.14}\pscircle*(14.5,2.25){.17}\pscircle*(15.5,3.75){.17}
\pscircle*(15.25,2.625){.17}\pscircle*(14.333,4){.17}
\rput(13.8,2.4){$z_0$}\rput(13.85,3.65){$z_1$}
\rput(16.2,3.9){$z_3$}\rput(15.9,2.4){$z_2$}
\psline[linewidth=.02](24.233,5.05)(24.75,-.275)
\psline[linewidth=.02](25.7,4.65)(24.55,-.525)
\pscircle*(24.7,.15){.14}\pscircle*(24.5,2.25){.17}\pscircle*(25.5,3.75){.17}
\pscircle*(25.25,2.625){.17}\pscircle*(24.333,4){.17}
\rput(23.8,2.4){$z_0$}\rput(23.85,3.65){$z_1$}
\rput(26.2,3.9){$z_3$}\rput(26,2.6){$z_2$}
\end{pspicture}
\caption{The universal curve $\cU\!\lra\!\ov\cM_{0,4}$}
\label{m04_fig}
\end{figure}

\begin{rmk}\label{modNums_rmk}
There are several systematic ways of orienting the moduli spaces $\ov\fM_{0,k}(\P^{2n-1},d)^{\phi,c}$, 
one of which is more natural from the point of view of algebraic geometry and
the others from the point of view of symplectic topology.
In \cite[Section~5.2]{Teh}, these moduli spaces are oriented using 
coefficients of polynomials describing holomorphic maps $\P^1\!\lra\!\P^{2n-1}$;
we use these orientations to define the numbers~\eref{realNdfn_e}
with $\phi\!=\!\tau_{2n}$ and the opposite orientations 
to define the numbers~\eref{realNdfn_e} with $\phi\!=\!\eta_{2n}$
(as needed to orient the glued space~\eref{gluedsp_e} if $d\!\in\!2\Z$).
This choice introduces a sign into the statement of Lemma~\ref{gluing_lmm},
as compared to \cite[Lemma~5.1]{GZ4};
the sign shifts in~\eref{modNums_e} offset the sign of Lemma~\ref{gluing_lmm}.
The orientations of moduli spaces used in~\cite{GZ4} are induced
from various pinching constructions of symplectic topology,
which do not appear as natural in the context of counting curves in projective spaces.
The two systems of orientations on the moduli spaces $\ov\fM_{0,k}(\P^{2n-1},d)^{\phi,c}$ 
agree (up to a sign independent of~$d$) if and only if $n$ is even. 
As explained in Remark~\ref{sign_rmk}, the sign shifts in~\eref{modNums_e}
indirectly switch the two systems of orientations so that \cite[Theorem~2.2]{GZ4}
applies to the numbers~\eref{modNums_e}.
This difference between the two systems of orientations is related to 
a subtle sign issue missed in the description of 
the localization data for real maps to~$\P^{4n+1}$ in the first three versions
of~\cite{Teh}; see Remark~\ref{sign_rmk} for more details.
\end{rmk}

\noindent
In Section~\ref{signcomp_sec}, we compare different orientations of moduli spaces
of constrained real maps and establish Lemma~\ref{orientcomp_lmm3}.
It leads to Corollary~\ref{sign_crl}, which implies Proposition~\ref{sign_prp},
the key step in the proof of Corollary~\ref{main0_crl} in Section~\ref{alterpf_sec}.\\

\noindent
We would like to thank E.~Ionel, J.~Koll\'ar, M.~Liu, N.~Sheridan, J.~Solomon, M.~Tehrani, 
and G.~Tian for related discussions.

\section{Proof of Corollary~\ref{main0_crl}}
\label{alterpf_sec}

\noindent
By~\eref{taueta_e} and the vanishing of the real invariants for $d\!\in\!2\Z$,
it is sufficient to assume that $d$ in Theorem~\ref{main_thm} is odd
and $\phi\!=\!\eta_{2n}$.
Let
$$\ov\fM_k^{\C}(d)=\ov\fM_{0,k}\big(\P^{2n-1},d\big), \qquad
\ov\fM_k^{\R}(d)=\ov\fM_{0,k}\big(\P^{2n-1},d\big)^{\eta_{2n}}\,;$$
we use the same conventions for the uncompactified moduli spaces.
We assume that $k\!\ge\!3$ and
$c_1,\ldots,c_k\!\in\!\Z^+$ are odd and satisfy
the equation on the right-hand side of~\eref{codimcond_e}.
Let
$$f_{0123}\!:\ov\fM_{k+1}^{\R}(d)\lra \ov\cM_{0,4}$$
denote the forgetful morphism in~\eref{forgmap_e}, 
with the marked points on the left-hand side indexed by $0,1,\ldots,k$.
Let
$\Om_{0,4}\!\in\!H^2(\ov\cM_{0,4})$ be the Poincare dual of the point class
and define
\BE{tiNdfn_e}\wt{N}_d^{\phi}(c_1,\ldots,c_k)=
(-1)^{\frac{n(d-1)}{2}}\!\!\!
\int_{\ov\fM_{k+1}^{\R}(d)}\!\!f_{0123}^*\Om_{0,4}\,
\ev_0^*H^{c_1-1}\,\ev_1^*H\,\ev_2^*H^{c_2}\,\ldots\,\ev_k^*H^{c_k}\,.\EE
Choose a generic collection  of linear subspaces $H_0,\ldots,H_k\!\subset\!\P^{2n-1}$ 
of complex codimensions $\hbox{$c_1\!-\!1$},1,c_2,\ldots,c_k$, respectively.
For any $\la\!\in\!\cM_{0,4}$, let 
$$Z_{\la}=\big\{u\!\in\!f_{0123}^{-1}(\la)\!:\,\ev_i(u)\!\in\!H_i~\forall\,
i\!=\!0,1,\ldots,k\big\} \subset \ov\fM_{k+1}^{\R}(d)\,.$$
This set is a compact oriented 0-dimensional submanifold of 
$\ov\fM_{k+1}^{\R}(d)$, i.e.~a finite set of signed points, 
if $\la$ is generic.
The number~\eref{tiNdfn_e} is the signed cardinality $^{\pm}\!|Z_{\la}|$ of this set.\\

\noindent
We prove Corollary~\ref{main0_crl} by explicitly describing the elements of 
$Z_{[1,1]}$ and~$Z_{[1,0]}$, with notation as in Figure~\ref{m04_fig},
and determining their contribution to the number~\eref{tiNdfn_e}.
The domain~$\Si_u$ of each element~$u$ of $Z_{[1,1]}$ and~$Z_{[1,0]}$ consists of 
at least two irreducible components.
Since the fixed point locus of the involution~$\eta_{2n}$ on~$\P^{2n-1}$
is empty, 
$$\ov\fM_{k+1}^{\R}(d) =\ov\fM_{0,k+1}(\P^{2n-1},d)^{\eta_{2n},\eta}$$
and $\Si_u$ has an odd number of irreducible components;
the involution~$\eta_u$ associated with~$u$ restricts to~$\eta$ on one of 
the components and interchanges the others in pairs.
For dimensional reasons, the number of irreducible components of~$\Si_u$ cannot be greater
than~3 and thus must be precisely~3.
Each map~$u$ with its marked points is completely determined by its restriction~$u^{\R}$
to the component~$\Si_u^{\R}$ of~$\Si_u$ preserved by~$\eta_u$ and 
its restriction~$u^{\C}$ to either of the other components.\\

\noindent
We depict all possibilities for the elements of $Z_{[1,1]}$ and~$Z_{[1,0]}$
in Figures~\ref{LHS_fig} and~\ref{RHS_fig}, respectively.
In each of the diagrams, the vertical line represents the irreducible component~$\Si_u^{\R}$ 
of~$\Si_u$
preserved by~$\eta_u$, while the two horizontal lines represent the components of~$\Si_u$
interchanged by~$\eta_u$; the integers next to the lines specify the degrees of~$u$ on
the corresponding components.
The larger dots on the three lines indicate the locations of 
the marked points~$z_0^+,z_1^+,z_2^+,z_3^+$; 
we label them by the codimensions of the constraints they map~to,
i.e.~$c_1\!-\!1,1,c_2,c_3$, in order to make the connection 
with the expression in Corollary~\ref{main0_crl} more apparent.
If a marked point~$z_i^+$ lies on the bottom component, 
its conjugate~$z_i^-$ lies on the top component.
In such a case, we indicate the conjugate point by a small dot on the upper component
and label it with~$\bar{c}_i$;
the restriction of~$u$ to the upper component maps this point to
the linear subspace
$$\ov{H_i}\equiv \eta_{2n}(H_i)\subset\P^{2n-1}.$$
By the definition of~$Z_{[1,1]}$, each diagram in Figure~\ref{LHS_fig}
contains a node separating 
the marked points~$z_0^+,z_1^+$ (i.e.~the larger dots labeled by $c_1\!-\!1,1$) from 
the marked points~$z_2^+,z_3^+$ (i.e.~the larger dots labeled by $c_2,c_3$).
Similarly, each diagram in Figure~\ref{RHS_fig} contains a node separating 
the marked points~$z_0^+,z_2^+$ from the marked points~$z_1^+,z_3^+$.
We arrange the diagrams in both cases so that the pair of marked points containing~0
lies above the other pair.
The remaining marked points, $z_4^{\pm},\ldots,z_k^{\pm}$, are distributed between the three components
in some way.\\

\begin{figure}
\begin{pspicture}(-.5,-3.5)(10,2.8)
\psset{unit=.4cm}
\psline[linewidth=.05](3,5)(3,0)
\psline[linewidth=.02](2.5,4.5)(7.5,4.5)\psline[linewidth=.02](2.5,.5)(7.5,.5)
\pscircle*(4,4.5){.15}\pscircle*(6,4.5){.15}
\pscircle*(3,3.5){.15}\pscircle*(3,1.5){.15}
\rput(4,5){\sm{1}}\rput(6,5){\sm{$c_1\!-\!1$}}
\pscircle*(6,.5){.1}\rput(6,1.1){\sm{$\ov{c_1\!-\!1}$}}
\pscircle*(4,.5){.1}\rput(4,1){\sm{$\bar1$}}
\rput(2.4,3.5){\sm{$c_2$}}\rput(2.4,1.5){\sm{$c_3$}}
\rput(8.2,4.5){\sm{$d_1$}}\rput(3,5.7){\sm{$d_2$}}
\psline[linewidth=.05](12,5)(12,0)
\psline[linewidth=.02](11.5,4.5)(16.5,4.5)\psline[linewidth=.02](11.5,.5)(16.5,.5)
\pscircle*(13,4.5){.15}\pscircle*(15,4.5){.15}
\pscircle*(12,2.5){.15}\pscircle*(14,.5){.15}
\rput(13,5){\sm{1}}\rput(15,5){\sm{$c_1\!-\!1$}}
\pscircle*(15,.5){.1}\rput(15,1.1){\sm{$\ov{c_1\!-\!1}$}}
\pscircle*(13,.5){.1}\rput(13,1){\sm{$\bar1$}}
\rput(11.4,2.5){\sm{$c_2$}}\rput(14.2,-.2){\sm{$c_3$}}
\rput(17.2,4.5){\sm{$d_1$}}\rput(12,5.7){\sm{$d_2$}}
\pscircle*(14,4.5){.1}\rput(14.2,3.8){\sm{$\bar{c}_3$}}
\psline[linewidth=.05](21,5)(21,0)
\psline[linewidth=.02](20.5,4.5)(25.5,4.5)\psline[linewidth=.02](20.5,.5)(25.5,.5)
\pscircle*(22,4.5){.15}\pscircle*(24,4.5){.15}
\pscircle*(21,2.5){.15}\pscircle*(23,.5){.15}
\rput(22,5){\sm{1}}\rput(24,5){\sm{$c_1\!-\!1$}}
\pscircle*(24,.5){.1}\rput(24,1.1){\sm{$\ov{c_1\!-\!1}$}}
\pscircle*(22,.5){.1}\rput(22,1){\sm{$\bar1$}}
\rput(20.4,2.5){\sm{$c_3$}}\rput(23.2,-.2){\sm{$c_2$}}
\rput(26.2,4.5){\sm{$d_1$}}\rput(21,5.7){\sm{$d_2$}}
\pscircle*(23,4.5){.1}\rput(23.2,3.8){\sm{$\bar{c}_2$}}
\psline[linewidth=.05](30,1)(30,-4)
\psline[linewidth=.02](29.5,.5)(34.5,.5)\psline[linewidth=.02](29.5,-3.5)(34.5,-3.5)
\pscircle*(31,.5){.15}\pscircle*(33,.5){.15}
\pscircle*(34,-3.5){.15}\pscircle*(32,-3.5){.15}
\rput(31,1){\sm{1}}\rput(33,1){\sm{$c_1\!-\!1$}}
\pscircle*(33,-3.5){.1}\rput(33,-2.9){\sm{$\ov{c_1\!-\!1}$}}
\pscircle*(31,-3.5){.1}\rput(31,-3){\sm{$\bar1$}}
\rput(34.2,-4.2){\sm{$c_3$}}\rput(32.2,-4.2){\sm{$c_2$}}
\rput(35.2,.5){\sm{$d_1$}}\rput(30,1.7){\sm{$d_2$}}
\pscircle*(32,.5){.1}\rput(32.2,-.2){\sm{$\bar{c}_2$}}
\pscircle*(34,.5){.1}\rput(34.2,-.2){\sm{$\bar{c}_3$}}
\psline[linewidth=.05](3,-3)(3,-8)
\psline[linewidth=.02](2.5,-3.5)(7.5,-3.5)\psline[linewidth=.02](2.5,-7.5)(7.5,-7.5)
\pscircle*(3,-5.5){.15}\pscircle*(6,-3.5){.15}
\pscircle*(7,-7.5){.15}\pscircle*(5,-7.5){.15}
\rput(2.5,-5.5){\sm{1}}\rput(6,-3){\sm{$c_1\!-\!1$}}
\pscircle*(6,-7.5){.1}\rput(6,-6.9){\sm{$\ov{c_1\!-\!1}$}}
\rput(7.2,-8.2){\sm{$c_3$}}\rput(5.2,-8.2){\sm{$c_2$}}
\rput(8.2,-3.5){\sm{$d_1$}}\rput(3,-2.3){\sm{$d_2$}}
\pscircle*(5,-3.5){.1}\rput(5.2,-4.2){\sm{$\bar{c}_2$}}
\pscircle*(7,-3.5){.1}\rput(7.2,-4.2){\sm{$\bar{c}_3$}}
\psline[linewidth=.05](12,-3)(12,-8)
\psline[linewidth=.02](11.5,-3.5)(16.5,-3.5)\psline[linewidth=.02](11.5,-7.5)(16.5,-7.5)
\pscircle*(12,-5.5){.15}\pscircle*(13,-3.5){.15}
\pscircle*(16,-7.5){.15}\pscircle*(14,-7.5){.15}
\rput(13,-3){\sm{1}}\rput(10.7,-5.5){\sm{$c_1\!-\!1$}}
\pscircle*(13,-7.5){.1}\rput(13,-7){\sm{$\bar1$}}
\rput(16.2,-8.2){\sm{$c_3$}}\rput(14.2,-8.2){\sm{$c_2$}}
\rput(17.2,-3.5){\sm{$d_1$}}\rput(12,-2.3){\sm{$d_2$}}
\pscircle*(14,-3.5){.1}\rput(14.2,-4.2){\sm{$\bar{c}_2$}}
\pscircle*(16,-3.5){.1}\rput(16.2,-4.2){\sm{$\bar{c}_3$}}
\psline[linewidth=.05](21,-3)(21,-8)
\psline[linewidth=.02](20.5,-3.5)(25.5,-3.5)\psline[linewidth=.02](20.5,-7.5)(25.5,-7.5)
\pscircle*(21,-6.5){.15}\pscircle*(21,-4.5){.15}
\pscircle*(25,-7.5){.15}\pscircle*(23,-7.5){.15}
\rput(20.5,-4.5){\sm{1}}\rput(19.7,-6.5){\sm{$c_1\!-\!1$}}
\rput(25.2,-8.2){\sm{$c_3$}}\rput(23.2,-8.2){\sm{$c_2$}}
\rput(26.2,-3.5){\sm{$d_1$}}\rput(21,-2.3){\sm{$d_2$}}
\pscircle*(23,-3.5){.1}\rput(23.2,-4.2){\sm{$\bar{c}_2$}}
\pscircle*(25,-3.5){.1}\rput(25.2,-4.2){\sm{$\bar{c}_3$}}
\end{pspicture}
\caption{Domains of elements of $Z_{[1,1]}$}
\label{LHS_fig}
\end{figure}

\begin{figure}
\begin{pspicture}(-.5,-3.5)(10,2.8)
\psset{unit=.4cm}
\psline[linewidth=.05](3,5)(3,0)
\psline[linewidth=.02](2.5,4.5)(7.5,4.5)\psline[linewidth=.02](2.5,.5)(7.5,.5)
\pscircle*(4,4.5){.15}\pscircle*(6,4.5){.15}
\pscircle*(3,3.5){.15}\pscircle*(3,1.5){.15}
\rput(4,5){\sm{$c_2$}}\rput(6,5){\sm{$c_1\!-\!1$}}
\pscircle*(6,.5){.1}\rput(6,1.1){\sm{$\ov{c_1\!-\!1}$}}
\pscircle*(4,.5){.1}\rput(4,1){\sm{$\bar{c}_2$}}
\rput(2.5,3.5){\sm{1}}\rput(2.4,1.5){\sm{$c_3$}}
\rput(8.2,4.5){\sm{$d_1$}}\rput(3,5.7){\sm{$d_2$}}
\psline[linewidth=.05](12,5)(12,0)
\psline[linewidth=.02](11.5,4.5)(16.5,4.5)\psline[linewidth=.02](11.5,.5)(16.5,.5)
\pscircle*(13,4.5){.15}\pscircle*(15,4.5){.15}
\pscircle*(12,2.5){.15}\pscircle*(14,.5){.15}
\rput(13,5){\sm{$c_2$}}\rput(15,5){\sm{$c_1\!-\!1$}}
\pscircle*(15,.5){.1}\rput(15,1.1){\sm{$\ov{c_1\!-\!1}$}}
\pscircle*(13,.5){.1}\rput(13,1){\sm{$\bar{c}_2$}}
\rput(11.5,2.5){\sm{1}}\rput(14.2,-.2){\sm{$c_3$}}
\rput(17.2,4.5){\sm{$d_1$}}\rput(12,5.7){\sm{$d_2$}}
\pscircle*(14,4.5){.1}\rput(14.2,3.8){\sm{$\bar{c}_3$}}
\psline[linewidth=.05](21,5)(21,0)
\psline[linewidth=.02](20.5,4.5)(25.5,4.5)\psline[linewidth=.02](20.5,.5)(25.5,.5)
\pscircle*(22,4.5){.15}\pscircle*(24,4.5){.15}
\pscircle*(21,2.5){.15}\pscircle*(23,.5){.15}
\rput(22,5){\sm{$c_2$}}\rput(24,5){\sm{$c_1\!-\!1$}}
\pscircle*(24,.5){.1}\rput(24,1.1){\sm{$\ov{c_1\!-\!1}$}}
\pscircle*(22,.5){.1}\rput(22,1){\sm{$\bar{c}_2$}}
\rput(20.4,2.5){\sm{$c_3$}}\rput(23,-.2){\sm{1}}
\rput(26.2,4.5){\sm{$d_1$}}\rput(21,5.7){\sm{$d_2$}}
\pscircle*(23,4.5){.1}\rput(23,3.8){\sm{$\bar{1}$}}
\psline[linewidth=.05](30,1)(30,-4)
\psline[linewidth=.02](29.5,.5)(34.5,.5)\psline[linewidth=.02](29.5,-3.5)(34.5,-3.5)
\pscircle*(31,.5){.15}\pscircle*(33,.5){.15}
\pscircle*(34,-3.5){.15}\pscircle*(32,-3.5){.15}
\rput(31,1){\sm{$c_2$}}\rput(33,1){\sm{$c_1\!-\!1$}}
\pscircle*(33,-3.5){.1}\rput(33,-2.9){\sm{$\ov{c_1\!-\!1}$}}
\pscircle*(31,-3.5){.1}\rput(31,-3){\sm{$\bar{c}_2$}}
\rput(34.2,-4.2){\sm{$c_3$}}\rput(32,-4.2){\sm{1}}
\rput(35.2,.5){\sm{$d_1$}}\rput(30,1.7){\sm{$d_2$}}
\pscircle*(32,.5){.1}\rput(32,-.2){\sm{$\bar{1}$}}
\pscircle*(34,.5){.1}\rput(34.2,-.2){\sm{$\bar{c}_3$}}
\psline[linewidth=.05](3,-3)(3,-8)
\psline[linewidth=.02](2.5,-3.5)(7.5,-3.5)\psline[linewidth=.02](2.5,-7.5)(7.5,-7.5)
\pscircle*(3,-5.5){.15}\pscircle*(6,-3.5){.15}
\pscircle*(7,-7.5){.15}\pscircle*(5,-7.5){.15}
\rput(2.4,-5.5){\sm{$c_2$}}\rput(6,-3){\sm{$c_1\!-\!1$}}
\pscircle*(6,-7.5){.1}\rput(6,-6.9){\sm{$\ov{c_1\!-\!1}$}}
\rput(7.2,-8.2){\sm{$c_3$}}\rput(5,-8.2){\sm{1}}
\rput(8.2,-3.5){\sm{$d_1$}}\rput(3,-2.3){\sm{$d_2$}}
\pscircle*(5,-3.5){.1}\rput(5,-4.2){\sm{$\bar{1}$}}
\pscircle*(7,-3.5){.1}\rput(7.2,-4.2){\sm{$\bar{c}_3$}}
\psline[linewidth=.05](12,-3)(12,-8)
\psline[linewidth=.02](11.5,-3.5)(16.5,-3.5)\psline[linewidth=.02](11.5,-7.5)(16.5,-7.5)
\pscircle*(12,-5.5){.15}\pscircle*(13,-3.5){.15}
\pscircle*(16,-7.5){.15}\pscircle*(14,-7.5){.15}
\rput(13,-3){\sm{$c_2$}}\rput(10.7,-5.5){\sm{$c_1\!-\!1$}}
\pscircle*(13,-7.5){.1}\rput(13,-7){\sm{$\bar{c}_2$}}
\rput(16.2,-8.2){\sm{$c_3$}}\rput(14,-8.2){\sm{1}}
\rput(17.2,-3.5){\sm{$d_1$}}\rput(12,-2.3){\sm{$d_2$}}
\pscircle*(14,-3.5){.1}\rput(14,-4.2){\sm{$\bar{1}$}}
\pscircle*(16,-3.5){.1}\rput(16.2,-4.2){\sm{$\bar{c}_3$}}
\psline[linewidth=.05](21,-3)(21,-8)
\psline[linewidth=.02](20.5,-3.5)(25.5,-3.5)\psline[linewidth=.02](20.5,-7.5)(25.5,-7.5)
\pscircle*(21,-6.5){.15}\pscircle*(21,-4.5){.15}
\pscircle*(25,-7.5){.15}\pscircle*(23,-7.5){.15}
\rput(20.4,-4.5){\sm{$c_2$}}\rput(19.7,-6.5){\sm{$c_1\!-\!1$}}
\rput(25.2,-8.2){\sm{$c_3$}}\rput(23,-8.2){\sm{1}}
\rput(26.2,-3.5){\sm{$d_1$}}\rput(21,-2.3){\sm{$d_2$}}
\pscircle*(23,-3.5){.1}\rput(23,-4.2){\sm{$\bar{1}$}}
\pscircle*(25,-3.5){.1}\rput(25.2,-4.2){\sm{$\bar{c}_3$}}
\end{pspicture}
\caption{Domains of elements of $Z_{[1,0]}$}
\label{RHS_fig}
\end{figure}

\noindent
Each element~$u$ of $Z_{[1,1]}$ and $Z_{[1,0]}$ corresponds,
via the restriction to~$\Si_u^{\R}$ and the upper component,
to a pair $(u^{\C},u^{\R})$, with
$$[u^{\C}]\in \fM_{k_1+1}(d_1), \quad
[u^{\R}]\in \fM_{k_2+1}^{\R}(d_2),\quad
2d_1\!+\!d_2=d, \quad k_1\!+\!k_2=k\!+\!1,$$
such that $u^{\R}$ and $u^{\C}$ meet at the pair of extra marked points
and pass through $H_0,\ldots,H_k$ or their conjugates as required by 
the distribution of the marked points.
Each such pair $u\!=\!(u^{\C},u^{\R})$ is an isolated element~of
$$\fM_{0,k_1+1}(\P^{2n-1},d_1)\times\fM_{k_2+1}^{\R}(n,d_2)$$
and has a well-defined contribution~$\ve(u)$ to the number~\eref{tiNdfn_e},
i.e.~the signed number of nearby elements of~$Z_{\la}$, with $\la\!\in\!\cM_{0,4}$.
By the next proposition, 
$$\ve(u)=(-1)^{n(d_2-1)/2}$$ 
for all elements~$u$ 
represented by the three diagrams in the first rows 
of Figures~\ref{LHS_fig} and~\ref{RHS_fig},
$$\ve(u)=-(-1)^{n(d_2-1)/2}$$ 
for the three diagrams in the second rows in these figures,
and $\ve(u)\!=\!0$ for the remaining, right-most diagram
in each of the figures.
Even if there were a contribution from the right-most diagram,
it would have been the same for $Z_{[1,1]}$ and $Z_{[1,0]}$
and so would have had no effect on the recursion of Theorem~\ref{main_thm}. 
 
\begin{prp}\label{sign_prp}
Suppose $u\!\in\!Z_{[1,1]}$. 
\begin{enumerate}[label=(\arabic*),leftmargin=*]
\item If $\Si_u^{\R}$ contains either of the marked points~$z_2^+,z_3^+$,
then $\ve(u)\!=\!(-1)^{n(d_2-1)/2}$.
\item If $\Si_u^{\R}$ contains either of the marked points~$z_0^+,z_1^+$,
then $\ve(u)\!=\!-(-1)^{n(d_2-1)/2}$. 
\item If $\Si_u^{\R}$ contains neither of the marked points~$z_0^+,z_1^+,z_2^+,z_3^+$,
then $\ve(u)\!=\!0$. 
\end{enumerate}
The same statements with~1 and~2 interchanged hold for $u\!\in\!Z_{[1,0]}$. 
\end{prp}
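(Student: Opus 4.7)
The plan is to study, for each isolated element $u$ represented by a diagram in Figure~\ref{LHS_fig} or~\ref{RHS_fig}, the local structure of~$\ov\fM_{k+1}^{\R}(d)$ near~$u$ and the local behavior of the forgetful map~$f_{0123}$, and from these to extract $\ve(u)$ as the signed count of nearby preimages of a generic $\la\!\in\!\cM_{0,4}$ meeting the constraints $H_0,\ldots,H_k$. First I would set up the gluing model: the domain~$\Si_u$ is a chain of three copies of~$\P^1$ joined at a conjugate pair of nodes, so a neighborhood of~$u$ in the constrained moduli space is parameterized by a single complex gluing parameter $\upsilon\!\in\!\C$ that simultaneously smooths the two nodes, with the gluing parameter at the conjugate node forced to be~$\bar\upsilon$ by reality, while the pair $(u^{\C},u^{\R})$ remains an isolated point of the constrained product moduli of the two components.

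Next I would compute the leading-order form of $f_{0123}$ in the~$\upsilon$ coordinate by writing out the cross-ratio~$\pi$ of Figure~\ref{m04_fig} as a function of the smoothing. The behavior depends on which components carry the four distinguished marked points $z_0^+,z_1^+,z_2^+,z_3^+$ and consequently survive the stabilization implicit in~$f_{0123}$. When~$\Si_u^{\R}$ contains two of these four marked points and one of the conjugate components contains the other two, exactly one of the two conjugate nodes survives stabilization and the cross-ratio depends linearly on its smoothing parameter, so $f_{0123}$ is locally $\upsilon\!\mapsto\!c\upsilon$ or $\upsilon\!\mapsto\!c\bar\upsilon$ for some nonzero $c\!\in\!\C$, according to whether the surviving node is the one smoothed by~$\upsilon$ or by~$\bar\upsilon$. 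When~$\Si_u^{\R}$ carries none of the four marked points, $\Si_u^{\R}$ itself is contracted by the stabilization, both nodes must be smoothed together to separate the two clusters, and a direct calculation of the cross-ratio gives $f_{0123}(\upsilon)=c|\upsilon|^2+O(|\upsilon|^4)$.

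The orientation sign of the constrained moduli at~$u$ would be extracted from the gluing construction. Lemma~\ref{gluing_lmm}, combined with the orientation comparison of Corollary~\ref{sign_crl}, matches the orientation on~$\ov\fM_{k+1}^{\R}(d)$ near~$u$ with the product orientation on $\fM_{k_1+1}^{\C}(d_1)\!\times\!\fM_{k_2+1}^{\R}(d_2)$ tensored with the canonical orientation of the gluing direction~$\C_\upsilon$, up to a universal sign $(-1)^{n(d_2-1)/2}$ coming from the convention~\eref{modNums_e} and independent of which of the six non-exceptional diagrams~$u$ represents. Multiplying this universal sign by the local sign of~$f_{0123}$ yields $\ve(u)=+(-1)^{n(d_2-1)/2}$ in the $\upsilon\!\mapsto\!\upsilon$ sub-case, $\ve(u)=-(-1)^{n(d_2-1)/2}$ in the $\upsilon\!\mapsto\!\bar\upsilon$ sub-case, and $\ve(u)=0$ in the $\upsilon\!\mapsto\!|\upsilon|^2$ sub-case, where the image is a real ray in~$\C$ and a generic target value is either missed entirely or hit along a non-isolated circle of solutions with cancelling orientations.

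To finish, I would match these three sub-cases to the three parts of the proposition. For $u\!\in\!Z_{[1,1]}$, the surviving node must separate $\{z_0^+,z_1^+\}$ from $\{z_2^+,z_3^+\}$: if~$\Si_u^{\R}$ carries $z_2^+$ or~$z_3^+$, then $\{z_0^+,z_1^+\}$ lies on the upper component and the local form of $f_{0123}$ is $\upsilon\!\mapsto\!\upsilon$, yielding case~(1); if~$\Si_u^{\R}$ carries $z_0^+$ or~$z_1^+$, then $\{z_2^+,z_3^+\}$ lies on the lower component and the local form is $\upsilon\!\mapsto\!\bar\upsilon$, yielding case~(2); otherwise the contracted-$\Si_u^{\R}$ scenario applies, yielding case~(3). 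For $u\!\in\!Z_{[1,0]}$ the pairing across the surviving node becomes $\{z_0^+,z_2^+\}|\{z_1^+,z_3^+\}$, which swaps the two $\upsilon$/$\bar\upsilon$ sub-cases above and hence produces the stated interchange of~(1) and~(2). The main obstacle throughout is establishing the universality of the orientation sign $(-1)^{n(d_2-1)/2}$ across all six non-exceptional diagrams; this is precisely what Corollary~\ref{sign_crl} (proved via the orientation comparisons of Section~\ref{signcomp_sec}, in particular Lemma~\ref{orientcomp_lmm3}) is designed to provide, so modulo that input the proof reduces to the explicit cross-ratio calculations sketched above.
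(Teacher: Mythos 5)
Your proposal is correct and follows essentially the same route as the paper: both reduce Proposition~\ref{sign_prp} to Corollary~\ref{sign_crl} by matching the distribution of $z_0^+,z_1^+,z_2^+,z_3^+$ among the three components to the sets $I^+,J,I^-$ and then tracking the sign through the prefactor in~\eref{tiNdfn_e} and the gluing sign $(-1)^{nd_1}$ of Lemma~\ref{gluing_lmm}. The paper invokes Corollary~\ref{sign_crl} directly, whereas you additionally re-derive its content (the local $\upsilon$, $\bar\upsilon$, and $|\upsilon|^2$ forms of the forgetful map and the accompanying orientation comparison), which is consistent with the proof of that corollary but is not an independent route.
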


\begin{proof} We apply Corollary~\ref{sign_crl} with $k$ replaced by $k\!+\!1$, 
$\{1,2,3,4\}$ by $\{0,1,2,3\}$, and with linear subspaces
of codimensions $c_1\!-\!1,1,c_2,\ldots,c_k$.
We take $J\!\subset\!\{0,1,\ldots,k\}$ to be the subset indexing the pairs of
marked points of~$u$ that lie on the central component~$\Si^{\R}$, 
$I^+$ to be the subset indexing the pairs with the first marked point
on the upper component, i.e.~the domain of~$u^{\C}$,
and $I^-$ to be the complement of $I^+\!\sqcup\!J$ 
in $\{1,\ldots,k\}$.
Since $c_i\!\not\in\!2\Z$ and $0\!\not\in\!I^-$, 
the set on the left-hand side of~\eref{signcases_e} is empty.
Since $\cN_{d_1,d_2;I^+,J,I^-}(\bfH)$ is 0-dimensional in this case,
Corollary~\ref{sign_crl} compares the sign of the elements of $\cN_{d_1,d_2;I^+,J,I^-}(\bfH)$
with the sign of the nearby elements of~$Z_{\la}$.\\

\noindent
Since the first case above corresponds to the first case on 
the right-hand side of~\eref{signcases_e}, the two signs differ 
by $(-1)^{nd_1}$.
Taking into account the extra sign in~\eref{tiNdfn_e}, 
we obtain the first claim of the proposition.
Since the second case above corresponds to the second case on  
the right-hand side of~\eref{signcases_e},
we similarly obtain the second claim.
The final claim of the proposition follows from the last statement 
of Corollary~\ref{sign_crl}.
\end{proof}

\begin{proof}[{\bf\emph{Proof of Corollary~\ref{main0_crl}}}]
We determine the number of elements represented by each diagram 
in Figures~\ref{LHS_fig} and~\ref{RHS_fig}.
Splitting the set $\{4,\ldots,k\}$ into subsets~$I$ and~$J$ in all possible ways,
we put the pairs of marked points indexed by~$J$ on the central component~$\Si_u^{\R}$,
one point of each pair indexed by~$I$ on the top component, and thus the other point
in the pair on the bottom component.
This gives $2^{|I|}$ choices of the distribution and requires~$u^{\C}$ to
pass through either $H_i$, with $i\!\in\!I$, or the conjugate complex hyperplane~$\ov{H}_i$.
By Proposition~\ref{sign_prp}, the contribution~$\ve(u)$ is independent of this choice.
Thus, we can simply multiply the number for one of these distributions by~$2^{|I|}$.
With the constraints completely distributed, we replace the node condition
by the usual splitting of the diagonal, i.e.~an extra constraint of~$H^i$ for~$u^{\C}$
and of~$H^j$ for~$u^{\R}$ with all possible~$i$ and~$j$ so that 
$i\!+\!j\!=\!2n\!-\!1$.
Thus, the contribution to the number~\eref{tiNdfn_e} from each diagram in 
Figures~\ref{LHS_fig} and~\ref{RHS_fig},
each partition $\{4,\ldots,k\}\!=\!I\!\sqcup\!J$, and each partition
$2n\!-\!1\!=\!i\!+\!j$ is
\BE{diagcontr_e}(-1)^{n(d_2-1)/2}\ve\blr{c_{\hat{I}},i}_{d_1}^{\P^{2n-1}}
N_{d_2}^{\eta_{2n}}\big(c_{\hat{J}},j\big)
=\ve\blr{c_{\hat{I}},i}_{d_1}^{\P^{2n-1}}\blr{c_{\hat{J}},j}_{d_2}^{\eta_{2n}},\EE
where 
\begin{enumerate}[label=$\bullet$,leftmargin=*]
\item $\ve\!=\!1$ for the three diagrams in the first rows of the figures,
$\ve\!=\!-1$ for the second rows, and $\ve\!=\!0$ for the right-most diagrams;
\item $\hat{I}$ is the union of $I$ and the subset of $\{0,1,2,3\}$ indexing
the pairs of marked points on the top and bottom components (e.g.~$\{0,1,3\}$
for the second diagram in the first row of Figure~\ref{LHS_fig});
\item $\hat{J}$ is the union of $J$ and the subset of $\{0,1,2,3\}$ indexing
the pairs of marked points on the vertical component (e.g.~$\{2\}$
for the second diagram in the first row of Figure~\ref{LHS_fig});
\item $c_0\!=\!c_1\!-\!1$, with $c_1$ as in~\eref{tiNdfn_e}, and
$c_1\!=\!1$ for the purposes of the definitions of~$c_{\hat{I}}$ 
and~$c_{\hat{J}}$ in~\eref{diagcontr_e}.
\end{enumerate}
Since $c_1\!-\!1\!\in\!2\Z$, the last factor in~\eref{diagcontr_e} vanishes
in the case of the last two diagrams in the second rows of both figures;
see \cite[Theorem~2.7]{GZ4}, 
which is an immediate consequence of Lemma~\ref{orientcomp_lmm3} in this case.
Furthermore, if $d_1\!=\!0$ and the complex invariant in~\eref{diagcontr_e} is nonzero, 
then $|\hat{I}|\!=\!2$ for dimensional reasons;
thus, the only $d_1\!=\!0$ contributions arise from the first diagrams in
Figures~\ref{LHS_fig} and~\ref{RHS_fig} with $I\!=\!\eset$.\\

\noindent
By the previous paragraph, only the three diagrams in the first row
and the first diagram in the second row of each figure contribute to the number~\eref{tiNdfn_e}.
The contribution from Figure~\ref{LHS_fig}, which corresponds to $\la\!=\![1,1]$
in~$\ov\cM_{0,4}$, thus equals
\begin{equation*}\begin{split}
&\blr{c_1,\ldots,c_k}_d^{\eta_{2n}}
+\sum_{\begin{subarray}{c}2d_1+d_2=d\\ d_1,d_2\ge1 \end{subarray}}
\sum_{I\sqcup J=\{4,\ldots,k\}}\sum_{\begin{subarray}{c}2i+j=2n-1\\ i,j\ge1\end{subarray}}
\!\!\!2^{|I|}\Bigg( 
d_1\blr{c_1\!-\!1,c_I,2i}_{d_1}^{\P^{2n-1}}\!\blr{c_2,c_3,c_J,j}_{d_2}^{\eta_{2n}}\\
&\hspace{.8in}
+d_1\blr{c_1\!-\!1,c_3,c_I,2i}_{d_1}^{\P^{2n-1}}\!\blr{c_2,c_J,j}_{d_2}^{\eta_{2n}}
+d_1\blr{c_1\!-\!1,c_2,c_I,2i}_{d_1}^{\P^{2n-1}}\!\blr{c_3,c_J,j}_{d_2}^{\eta_{2n}}\\
&\hspace{3.5in}-d_2\blr{c_1\!-\!1,c_2,c_3,c_I,2i}_{d_1}^{\P^{2n-1}}
\!\blr{c_J,j}_{d_2}^{\eta_{2n}}\Bigg).
\end{split}\end{equation*}
The contribution from Figure~\ref{RHS_fig}, which corresponds to $\la\!=\![1,0]$
in~$\ov\cM_{0,4}$, similarly equals
\begin{equation*}\begin{split}
&d\blr{c_1\!+\!c_2\!-\!1,c_3,\ldots,c_k}_d^{\eta_{2n}}
+\sum_{\begin{subarray}{c}2d_1+d_2=d\\ d_1,d_2\ge1 \end{subarray}}
\sum_{I\sqcup J=\{4,\ldots,k\}}
\sum_{\begin{subarray}{c}2i+j=2n-1\\ i,j\ge1\end{subarray}}\!\!\!2^{|I|}\Bigg(\\
&\hspace{.9in} 
d_2\blr{c_1\!-\!1,c_2,c_I,2i}_{d_1}^{\P^{2n-1}}\!\blr{c_3,c_J,j}_{d_2}^{\eta_{2n}}
+d_2\blr{c_1\!-\!1,c_2,c_3,c_I,2i}_{d_1}^{\P^{2n-1}}\!\blr{c_J,j}_{d_2}^{\eta_{2n}}\\
&\hspace{.8in}
+d_1\blr{c_1\!-\!1,c_2,c_I,2i}_{d_1}^{\P^{2n-1}}\!\blr{c_3,c_J,j}_{d_2}^{\eta_{2n}}
-d_1\blr{c_1\!-\!1,c_3,c_I,2i}_{d_1}^{\P^{2n-1}}\!\blr{c_2,c_J,j}_{d_2}^{\eta_{2n}}
\Bigg).
\end{split}\end{equation*}
Setting the two expressions equal and solving for $\blr{c_1,\ldots,c_k}_d^{\eta_{2n}}$,
we obtain
\begin{equation*}\begin{split}
&\blr{c_1,\ldots,c_k}_d^{\eta_{2n}}= 
d\blr{c_1\!+\!c_2\!-\!1,c_3,\ldots,c_k}_d^{\eta_{2n}}
+\sum_{\begin{subarray}{c}2d_1+d_2=d\\ d_1,d_2\ge1 \end{subarray}}
\sum_{I\sqcup J=\{4,\ldots,k\}}\sum_{\begin{subarray}{c}2i+j=2n-1\\ i,j\ge1\end{subarray}}
\!\!\!2^{|I|}\Bigg(\\
&\hspace{.5in}
2d_2\blr{c_1\!-\!1,c_2,c_3,c_I,2i}_{d_1}^{\P^{2n-1}}
\!\blr{c_J,j}_{d_2}^{\eta_{2n}}
\!-d_1\blr{c_1\!-\!1,c_I,2i}_{d_1}^{\P^{2n-1}}
\!\blr{c_2,c_3,c_J,j}_{d_2}^{\eta_{2n}}\\
&\qquad+
d_2\blr{c_1\!-\!1,c_2,c_I,2i}_{d_1}^{\P^{2n-1}}\!\blr{c_3,c_J,j}_{d_2}^{\eta_{2n}}
-2d_1\blr{c_1\!-\!1,c_3,c_I,2i}_{d_1}^{\P^{2n-1}}\!\blr{c_2,c_J,j}_{d_2}^{\eta_{2n}}\Bigg);
\end{split}\end{equation*}
this formula simplifies to the statement of Corollary~\ref{main0_crl}.
\end{proof}

\begin{rmk}\label{degen_rmk}
The above extends directly to real symplectic manifolds $(X,\om,\phi)$
such that the fixed locus~$X^{\phi}$ of~$\phi$ is empty.
If $X^{\phi}\!\neq\!\eset$, the spaces $Z_{[1,1]}$ and~$Z_{[1,0]}$ 
defined in this section could also contain  two-component maps~$(u_1,u_2)$ of two types:
\begin{enumerate}[label=(\arabic*),leftmargin=*]
\item  the involution on the domain interchanges the two components of the domain
and fixes the node
(this corresponds to sphere bubbling in open GW-theory);
\item the involution on the domain restricts to~$\tau$ on each component of the domain
and fixes the node
(this corresponds to disk bubbling in open GW-theory).
\end{enumerate}
As the above degenerations are of real codimension one,
their intersections with~$Z_{[1,1]}$ and~$Z_{[1,0]}$ are one-dimensional.
Bubble maps of the first type appear in the second proof of \cite[Theorem~2.2]{GZ4}
and do not contribute to the number in \cite[(6.2)]{GZ4} by \cite[Proposition~6.2]{GZ4}.
By the same reasoning, these bubble maps would not contribute to 
the analogue of~\eref{tiNdfn_e} for general real symplectic manifolds as 
in \cite[Theorem~2.2]{GZ4}.
However, the proof of \cite[Proposition~6.2]{GZ4} does not apply 
to the spaces of two-component bubble maps of the second type above,
because they can further degenerate into three real bubbles and 
the function $u\!\lra\!z_1(u)$ in the proof of \cite[Proposition~6.2]{GZ4}
vanishes along some of these degenerations;
the problem degenerations correspond to the intersections of the closures
of different strata of two-component  maps.
The conclusion of \cite[Proposition~6.2]{GZ4} can fail for the closures of
the individual strata of two-component  maps, though 
it may perhaps hold for the connected components of
the union of such closures under the assumptions of \cite[Theorem~2.2]{GZ4}.
For real symplectic manifolds of dimension~6 (and thus with fixed locus of dimension~3),
this issue may be related to some linking phenomena to which an allusion
is made in \cite[Remark~4]{Sol2};
these phenomena do not effect the WDVV relation of \cite[Theorem~2.2]{GZ4} though.
\end{rmk}

\section{Sign computations}
\label{signcomp_sec}

\noindent
For $d\!\in\!\Z^+$, denote~by $\cN_d\subset\ov\fM_0^{\R}(d)$
the sub-orbifold of maps from domains consisting of precisely three components 
and let
\begin{equation*}\begin{split}
\wt\cN_d&=\bigsqcup_{\begin{subarray}{c}2d_1+d_2=d\\ d_1\ge0,d_2>0\end{subarray}}
\!\!\!\!\!\!\wt\cN_{d_1,d_2}\,,\qquad\hbox{where}\\
\wt\cN_{d_1,d_2}&=\big\{(u^{\C},u^{\R})\in
\fM_1^{\C}(d_1)\!\times\!\fM_1^{\R}(d_2)\!:
\ev_0(u^{\C})\!=\!\ev_0(u^{\R})\big\},
\end{split}\end{equation*} 
with the marked points indexed by~0.
Identifying the marked point~$z_0^{\C}$ of the domain of~$u^{\C}$ 
with the first marked point~$z_0^{\R}$ of the domain of~$u^{\R}$
and the marked point~$\eta(z_0^{\C})$ of the map $\eta_{2n}\!\circ\!u^{\C}\!\circ\!\eta$
with~$\eta(z_0^{\R})$, we obtain a double covering
$$q\!:\wt\cN_d \lra \cN_d\,.$$
The canonical orientations of  $\P^{2n-1}$ and $\fM_1^{\C}(d_1)$
and the chosen orientation of $\fM_1^{\R}(d_2)$
induce an orientation on $\wt\cN_d$.
The actions of the deck transformation on the moduli spaces
$\fM_1^{\R}(d_2)$ and $\fM_1^{\C}(d_1)$ and the condition $\ev_0(u^{\C})\!=\!\ev_0(u^{\R})$
are all orientation-reversing because
\begin{enumerate}[label=$\bullet$,leftmargin=*]
\item the first action is the conjugation of the marked point;
\item the second action corresponds to the complex conjugation on 
$\fM_0^{\C}(d_1)$, which is of even complex dimension,
and to the conjugation of the marked point;
\item the third action corresponds to the complex conjugation on~$\P^{2n-1}$.
\end{enumerate}
In particular, $\cN_d$ is not orientable ($\wt\cN_d$ is connected
by \cite[Appendix~A.1]{Teh}).\\

\noindent
Let $L^{\C}\!\lra\!\fM_1^{\C}(d_1)$ and $L^{\R}\!\lra\!\fM_1^{\R}(d)$
be the tautological line bundles and
$$\ti{L}=\pi_1^*L^{\C}\otimes_{\C} \pi_2^*L^{\R}\lra \wt\cN_d\,,$$
where $\pi_1,\pi_2$ are the component projection maps.
The action of the natural lift of the deck transformation on~$\ti{L}$ 
is $\C$-antilinear on each fiber and induces a vector bundle~$L$ over~$\cN_d$. 
This is the normal line bundle of $\cN_d$ in $\ov\fM_0^{\R}(d)$:
as described in Section~\ref{lmmpf_sec}, there is a gluing~map
\BE{Phidfn_e3}\Phi\!:U\lra \ov\fM_0^{\R}(d),\EE
where $U\!\subset\!L$ is a neighborhood of the zero set in~$L$.
The orientation on the total space of~$\ti{L}$ descends to an orientation
on the quotient vector bundle of~$L$. 

\begin{lmm}\label{gluing_lmm}
The restriction of the gluing map~\eref{Phidfn_e3} to 
$\cN_{d_1,d_2}\!\equiv\!q(\wt\cN_{d_1,d_2})$ is orientation-preserving with respect to 
the orientation on the total space of~$L$ described above
if and only if $nd_1\!\in\!2\Z$.
\end{lmm}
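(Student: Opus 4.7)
The plan is to compute the Jacobian of $\Phi$ explicitly in the polynomial coordinates of \cite[Section~5.2]{Teh} used to orient the moduli spaces throughout this paper. Since the orientation on $L$ is the descent of an orientation on $\widetilde L$, it suffices to verify that $\Phi\!\circ\!q$ is orientation-preserving if and only if $nd_1\!\in\!2\Z$, where the orientation on $\widetilde L|_{\wt\cN_{d_1,d_2}}$ is assembled from the canonical complex orientation on $\fM_1^{\C}(d_1)$, the chosen orientation on $\fM_1^{\R}(d_2)$ from \cite[Section~5.2]{Teh}, the complex orientation on $\P^{2n-1}$ (for the matching $\ev_0(u^{\C})\!=\!\ev_0(u^{\R})$), and the canonical complex orientation on the fiber of $\widetilde L$.

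Near $(u^{\C},u^{\R})\!\in\!\wt\cN_{d_1,d_2}$, I would use the coefficients of $u^{\C}\!=\![F^{\C}_1\!:\!\cdots\!:\!F^{\C}_{2n}]$ with $\deg F^{\C}_j\!=\!d_1$ as free complex parameters, and the coefficients of $u^{\R}\!=\![F^{\R}_1\!:\!\cdots\!:\!F^{\R}_{2n}]$ with $\deg F^{\R}_j\!=\!d_2$ subject to the $(\eta_{2n},\eta)$-reality relations, which express the coefficients of $F^{\R}_{2i}$ in terms of those of $F^{\R}_{2i-1}$. The standard plumbing of the conjugate pair of nodes by a single complex parameter $\ups$ produces $2n$ glued polynomials $F_j(z;\ups)$ of degree $d\!=\!2d_1\!+\!d_2$ satisfying the same reality relation. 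To leading order near the nodal stratum, the coefficients of $F_{2i-1}$ (which orient $\ov\fM_0^{\R}(d)$ at the glued point) split by degree in $z$ into three contiguous ranges: the low-degree coefficients are complex-linear in those of $F^{\C}_{2i-1}$; the middle-degree coefficients depend real-linearly and orientation-preservingly on those of $F^{\R}_{2i-1}$; and the high-degree coefficients come from the conjugate side component $u^{\ov\C}\!=\!\eta_{2n}\!\circ\!u^{\C}\!\circ\!\eta$ and so are complex-antilinear in the coefficients of $F^{\C}_{2i}$, via the identification $F^{\ov\C}_{2i-1}\!=\!-\overline{F^{\C}_{2i}\!\circ\!\eta}$.

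After a positive change of basis, the Jacobian is block diagonal with one complex-antilinear block (coming from $F^{\ov\C}_{2i-1}$) and the other blocks all orientation-preserving. After the $\C^*$-scaling of the polynomial tuples, the $\tn{Aut}(\P^1,z_0)$ quotient, and the $2n\!-\!1$ complex matching conditions at the node are imposed, the antilinear block acts by complex conjugation on exactly $nd_1$ independent complex parameters, namely the surviving coefficients of $F^{\C}_{2i}$ for $i\!=\!1,\ldots,n$. Since complex conjugation on $\C^m$ has real determinant $(-1)^m$, this yields the sign $(-1)^{nd_1}$, proving the lemma.

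The main obstacle will be verifying the ``trivial sign'' claims: that the $F^{\R}_{2i-1}$ block preserves orientation under the conventions of \cite[Section~5.2]{Teh} on both source and target (requiring a careful comparison of the reality sign conventions for degrees $d_1$, $d_2$, and $d$), and that the reorganization of coefficient blocks by degree in $z$ on the target agrees with the source decomposition up to an orientation-preserving change of basis. Once this bookkeeping is completed, the sign computation reduces cleanly to the single antilinear block of complex rank $nd_1$ described above, giving precisely $(-1)^{nd_1}$.
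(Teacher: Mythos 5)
Your overall strategy matches the paper's: both proofs work directly in the polynomial coordinates of [Teh, Section 5.2], write out the gluing map explicitly, identify the part of the Jacobian that is complex-antilinear (coming from the conjugate bubble $u^{\ov\C}\!=\!\eta_{2n}\!\circ\!u^{\C}\!\circ\!\eta$), and arrive at the sign $(-1)^{nd_1}$. So the architecture of your argument and the final count $nd_1$ are correct.

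The significant gap is in the claim that ``the coefficients of $F_{2i-1}$ \ldots split by degree in $z$ into three contiguous ranges'' corresponding to $F^{\C}_{2i-1}$, $F^{\R}_{2i-1}$, and $F^{\ov\C}_{2i-1}$. The glued polynomial is the \emph{product} of these three factors, and the coefficients of a product are convolutions of the coefficients of the factors; they do not separate into degree ranges tied to individual factors, not even to leading order near the nodal locus (where the bubble factors degenerate to $(x\!-\!cy)^{d_1}$ and $(\bar c x\!+\!y)^{d_1}$, which already spread across all degrees). You acknowledge this as a ``trivial sign'' to verify via an orientation-preserving change of basis, but that change of basis is exactly where the content of the lemma lives, and it is neither trivial nor obviously orientation-preserving. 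This is precisely why the paper works with \emph{roots} rather than coefficients: the roots of a product are the disjoint union of the roots of the factors, so the map $\wt\Phi$ in Section~\ref{lmmpf_sec} is literally entrywise of the form $b\mapsto c+b$ (holomorphic), $b\mapsto\ov{(c+b)}^{\,-1}$ (antiholomorphic), or the identity on the $a_{i;r}$, with no mixing between blocks. The antiholomorphic entries are the $h_{c;2}(b_{2i;r})$ with $i\le n$ and $r\le d_1$, giving $nd_1$ of them directly, with no need to untangle the $\C^*$-scaling, $\tn{Aut}(\P^1,z_0)$, or matching conditions from the antilinear block. Your coefficient-based count of $nd_1$ also relies implicitly on the matching condition eliminating one coefficient per $F^{\C}_{2i}$ (turning $d_1\!+\!1$ into $d_1$), which you do not explain; with roots, each $F^{\C}_{2i}$ has exactly $d_1$ of them and this bookkeeping disappears. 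To repair the proposal, replace the coefficient description by the parametrization via $(\tn{Sym}^{d}\C)^n\times\R\P^{2n-1}$ as in the paper, which makes the block structure exact rather than ``to leading order after a change of basis.''
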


\noindent
This lemma is proved in Section~\ref{lmmpf_sec}.
In Corollary~\ref{sign_crl},  
Lemma~\ref{gluing_lmm} is applied to marked moduli spaces over 
topological components $\cN_{d_1,d_2;I^+,J,I^-}$ of~$\cN_d$ on 
which the two conjugate bubbles can be systematically distinguished. 
These topological components are thus oriented by the choice of 
which conjugate bubble is distinguished.
In the case of the diagrams in Figures~\ref{LHS_fig} and~\ref{RHS_fig},
we take the bubble corresponding to the upper line segment to be the distinguished one.

\begin{rmk}\label{sign_rmk}
Moduli spaces in GW-theory are normally oriented by orienting
the index of the linearized $\dbar$-operator via a pinching off construction;
see the proof of \cite[Lemma~5.2]{GZ4}.
In complex GW-theory, the standard orientations
of the index bundles are essentially complex and gluing maps like~\eref{Phidfn_e3} 
are automatically orientation-preserving.
In similarity with the situation in complex GW-theory,
analogous gluing maps are assumed to be orientation-preserving in \cite[(11)]{PSW} without any comment;
this was also assumed to be the case in the early versions of~\cite{Teh}.
In \cite[Section~5.2]{Teh}, the moduli spaces $\ov\fM_0^{\R}(d)$ are oriented directly;
this is the orientation used in this paper as described in Section~\ref{lmmpf_sec}.
If $n\!\in\!2\Z$, the resulting orientation agrees with the orientation
induced by a real square root of $\La_{\C}^{\top}(T\P^{2n-1},\tnd\eta_{2n})$
via the pinching off construction of \cite[Section~2.1]{Teh};
see the paragraph above Remark~6.9 in~\cite{Teh}.
Thus, in this case, the gluing map~\eref{Phidfn_e3} is orientation-preserving,
as Lemma~\ref{gluing_lmm} states.
If \hbox{$n\!\not\in\!2\Z$}, $\La_{\C}^{\top}(T\P^{2n-1},\tnd\eta_{2n})$ does
not admit a real square root.
In this case,  Lemma~\ref{gluing_lmm} implies that 
the chosen orientations of $\ov\fM_0^{\R}(d)$ differ from 
orientations arising via a systematic pinching off construction,
as in \cite[Sections~4,6]{GZ1}, by $(-1)^{n(d-1)/2}$ for $d$ odd.
The implication of this subtle issue for purely computational purposes is 
that the Euler class of the normal bundle of a fixed locus in 
$\ov\fM_k^{\R}(d)$  described in \cite[Section~6.2]{Teh}
should be multiplied by $(-1)^{n(d-d_0)/2}$, where $d_0$ is the degree on the central
component. 
\end{rmk}

\noindent
For $k\!\in\!\Z^+$, there is a fibration
$$\pi\!:\fM_k^{\R}(d)\lra \fM_0^{\R}(d)$$
obtained by forgetting the $k$ pairs of conjugate marked points.
The fiber of~$\pi$ over any point~$[u]$ of the base is isomorphic~to 
an open subspace~of $(\P^1)^k$ by the~map
\begin{gather*}
\io_{k,u}\!:
\big\{(z_1,\ldots,z_k)\!\in\!(\P^1)^k\!:\,z_i\!\neq\!z_j,\eta(z_j)~\forall\,i\!\neq\!j\big\}
\lra \fM_k^{\R}(d), \\
(z_1,\ldots,z_k) \lra \big[(z_1,\eta(z_1)),\ldots,(z_k,\eta(z_k)),u\big].
\end{gather*}
For each subset $I\!\subset\!\{1,\ldots,k\}$, 
let $\io_{k,u;I}$ denote the modification of~$\io_{k,u}$ taking 
the $i$-th component~$z_i$ of $(z_1,\ldots,z_k)$ to the second element
in the $i$-th conjugate pair whenever $i\!\in\!I$; thus, $\io_{k,u;\eset}\!=\!\io_{k,u}$.
The canonical orientations of $\fM_0^{\R}(d)$
and of~$\P^1$ induce via~$\io_{k,u;I}$ an orientation on $\fM_k^{\R}(d)$
and thus on $\ov\fM_k^{\R}(d)$,
which we will call the $I$-orientation.
The orientation on this space, 
which is used to define the numbers~\eref{realNdfn_e}, is the $\eset$-orientation.
Since $\eta$ is an orientation-reversing involution on~$\P^1$,
the $I$-orientation agrees with the canonical orientation if and only if $|I|$
is~even.\\

\noindent
For each $i\!=\!1,\ldots,k$, let
$$\ov\ev_i\equiv\eta_{2n}\!\circ\!\ev_i\!: \ov\fM_k^{\R}(d)\lra \P^{2n-1}$$
be the evaluation map at the second point in the $i$-th conjugate pair.
Denote~by 
\BE{totevdfn_e} \ev\!\equiv\!\ev_1\!\times\!\ldots\!\times\!\ev_k\!:
\ov\fM_k^{\R}(d)\lra (\P^{2n-1})^k\EE
the total evaluation map at the first point in each conjugation pair.
For each $I\!\subset\!\{1,\ldots,k\}$, let
$$\ev_I\!:
\ov\fM_k^{\R}(d)\lra (\P^{2n-1})^k$$
be the modification of~$\ev$ obtained by replacing~$\ev_i$ with~$\ov\ev_i$
whenever $i\!\in\!I$.\\

\noindent
For any subspace $H\!\subset\!\P^{2n-1}$, let $\ov{H}\!=\!\eta_{2n}(H)$ as before.
If $\bfH\!=\!(H_1,\ldots,H_k)$ is a tuple of subspaces of~$\P^{2n-1}$, let
$$\lr\bfH=H_1\times\ldots\times H_k\subset (\P^{2n-1})^k\,.$$
For each $I\!\subset\!\{1,\ldots,k\}$, denote by
$\lr\bfH_I\!\subset\!(\P^{2n-1})^k$ the modification of~$\lr\bfH$
obtained by replacing the $i$-th component~$H_i$ with~$\ov{H_i}$ whenever $i\!\in\!I$.
We define an~involution on $(\P^{2n-1})^k$~by
\begin{gather*}
\Th_I\!: (\P^{2n-1})^k\lra (\P^{2n-1})^k, \qquad 
\big(x_1,\ldots,x_k\big)\lra \big(\Th_{I;1}(x_1),\ldots,\Th_{I;k}(x_k)\big),\\
\hbox{where}\qquad
\Th_{I;i}(x)=\begin{cases}x,&\hbox{if}~i\!\not\in\!I;\\
\eta_{2n}(x),&\hbox{if}~i\!\in\!I.
\end{cases}
\end{gather*}
Let
$$\ov\fM_d^{\R}(\bfH)_I=
\big\{u\!\in\!\ov\fM_k^{\R}(d)_I\!:~\ev_I(u)\!\in\! \lr\bfH_I\big\}.$$
This subspace does not depend on the choice of~$I$, but
its orientation imposed below does in general.\\ 

\noindent
Suppose $\bfH\!=\!(H_1,\ldots,H_k)$ is a tuple of complex linear subspaces of~$\P^{2n-1}$
that are in general position, i.e.~so that the restriction of 
the total evaluation map~\eref{totevdfn_e} to every stratum of the moduli space
(consisting of maps from domains of a fixed topological type) is transverse
to $\lr\bfH$ in~$(\P^{n-1})^k$.
If $I\!\subset\!\{1,\ldots,k\}$, $\ov\fM_d^{\R}(\bfH)_I$ is then a smooth manifold.
The complex orientations on~$\lr\bfH_I$ and~$(\P^{2n-1})^k$,
the $I$-orientation on $\ov\fM_k^{\R}(d)$,
and the map~$\ev_I$ induce an orientation on~$\ov\fM_d^{\R}(\bfH)_I$.
 
\begin{lmm}\label{orientcomp_lmm3}
Let $d,k,n\!\in\!\Z^+$, $\bfH\!=\!(H_1,\ldots,H_k)$ be a general tuple of complex 
linear subspaces of~$\P^{2n-1}$ of complex codimensions $c_1,\ldots,c_k$,
respectively, and $I\!\subset\!\{1,\ldots,k\}$.
The orientations of $\ov\fM_d^{\R}(\bfH)\!\equiv\!\ov\fM_d^{\R}(\bfH)_{\eset}$
and $\ov\fM_d^{\R}(\bfH)_I$ are the same if and only~if the~set
$\{i\!\in\!I\!:\,c_i\!\in\!2\Z\}$
is of even cardinality.
\end{lmm}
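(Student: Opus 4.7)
The plan is to compare the two orientations on $\ov\fM_d^{\R}(\bfH)_I$ by exploiting the naturality identities $\ev_I=\Th_I\circ\ev$ and $\lr\bfH_I=\Th_I(\lr\bfH)$, which together force $\ev_I^{-1}(\lr\bfH_I)=\ev^{-1}(\lr\bfH)$ set-theoretically. Since both orientations on this common preimage arise via the usual preimage construction from a transverse map into $(\P^{2n-1})^k$, the sign difference at any point $p$ factors as the product of (a) the ratio of the $I$- and $\eset$-orientations on the ambient moduli space $\ov\fM_k^{\R}(d)$ and (b) the ratio of the complex orientation on $N_{\ev_I(p)}\lr\bfH_I$ to the image under $d\Th_I$ of the complex orientation on $N_{\ev(p)}\lr\bfH$. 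I would compute each factor separately.

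For (a), I would invoke the observation made just before the statement of the lemma: since $\eta\colon\P^1\to\P^1$ is orientation-reversing, the parametrization $\io_{k,u;I}$ differs from $\io_{k,u}$ by an orientation-reversing diffeomorphism on exactly $|I|$ of the coordinate $\P^1$-factors, so the $I$-orientation on $\ov\fM_k^{\R}(d)$ equals $(-1)^{|I|}$ times the $\eset$-orientation.

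For (b), I would note that each component $\Th_{I;i}=\eta_{2n}$ is anti-holomorphic and carries $H_i$ diffeomorphically onto $\ov{H_i}$, so it induces a $\C$-antilinear isomorphism of normal bundles $N_{x_i}H_i\to N_{\Th_{I;i}(x_i)}\ov{H_i}$ between complex vector spaces of complex dimension $c_i$. Since a $\C$-antilinear automorphism of $\C^{c}$ reverses orientation if and only if $c$ is odd, each $i\in I$ contributes a sign $(-1)^{c_i}$, for a total factor of $(-1)^{\sum_{i\in I}c_i}$.

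Combining (a) and (b), the two orientations agree if and only if
\[
|I|+\sum_{i\in I}c_i=\sum_{i\in I}(1+c_i)
\]
is even, which happens precisely when $\#\{i\in I\colon c_i\in2\Z\}$ is even, as claimed. The main obstacle I anticipate is keeping the bookkeeping of normal-bundle orientations straight when relating the preimage short exact sequences for $\ev$ and $\ev_I$ via $d\Th_I$; but once the contributions (a) and (b) are pinned down at a single tangent space of the preimage, multiplicativity of signs yields the conclusion.
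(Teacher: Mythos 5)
Your proof is correct and follows essentially the same route as the paper's: both factor $\tnd\ev_I=\tnd\Th_I\circ\tnd\ev$, compute the sign $(-1)^{\sum_{i\in I}c_i}$ of $\tnd\Th_I$ on the normal bundles of the constraint subspaces, combine it with the sign $(-1)^{|I|}$ relating the $I$- and $\eset$-orientations on the ambient moduli space, and observe that $(-1)^{|I|+\sum_{i\in I}c_i}=(-1)^{\#\{i\in I:\,c_i\in2\Z\}}$.
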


\begin{proof}
By the transversality assumption, 
\BE{normbndl_e3} \tnd\ev_I\!: 
\frac{T(\fM_k^{\R}(d)_I)|_{\fM_d^{\R}(\bfH)_I}}{T(\fM_d^{\R}(\bfH)_I)}
\lra \ev_I^*\frac{T((\P^{2n-1})^k)|_{\lr\bfH_I}}{T(\lr\bfH_I)}\EE
is an isomorphism of vector bundles.
The orientation on the right-hand side of~\eref{normbndl_e3} induced by the complex orientations
of~$(\P^{2n-1})^k$ and $\lr\bfH_I$ induce an orientation on 
the left-hand side of~\eref{normbndl_e3}.
Along with the $I$-orientation on $\fM_k^{\R}(d)$,
the latter induces an orientation on~$\fM_d^{\R}(\bfH)_I$.
By the Chain Rule, $\tnd\ev_I\!=\!\tnd\Th_I\!\circ\!\tnd\ev$.
The sign of the isomorphism
$$\tnd\Th_I\!:\frac{(T(\P^{2n-1})^k)|_{\lr\bfH}}{T(\lr\bfH)}
\lra\Th_I^*\frac{T((\P^{2n-1})^k)|_{\lr\bfH_I}}{T(\lr\bfH_I)}$$
is $(-1)$ to the cardinality of the set $\{i\!\in\!I\!:\,c_i\!\not\in\!2\Z\}$.
The $I$-orientation on $\ov\fM_k^{\R}(d)$
differs from the canonical one by~$(-1)^{|I|}$.
Combining the two signs, we obtain the claim.
\end{proof}

\noindent
If $d\!=\!2d_1\!+\!d_2$ and $\{1,\ldots,k\}\!=\!I^+\!\sqcup\!J\!\sqcup\!I^-$,
let
$$\cN_{d_1,d_2;I^+,J,I^-}(\bfH)\subset \cN_{d_1,d_2}\cap \ov\fM_d^{\R}(\bfH)$$
be the subset consisting of maps from marked three-component domains so that 
the central component carries the marked points in the pairs indexed by~$J$,
one of the other components carries the first points in the pairs indexed by~$I^+$,
and the third component carries the first points in the pairs indexed by~$I^-$.
With notation as at the beginning of this section, 
we will associate~$I^+$ with the space of bubble components~$u^{\C}$ 
used to orient $\cN_{d_1,d_2;I^+,J,I^-}(\bfH)$;
these bubble components now carry marked points indexed by $I^+\!\sqcup\!I^-$,
in addition to the marked point corresponding to the node.
As in complex GW-theory, a small modification of 
the gluing map~\eref{Phidfn_e3} gives rise to a gluing map 
$$\Phi_{\bfH}\!: U_{\bfH}\lra   \ov\fM_d^{\R}(\bfH),$$
where $U_{\bfH}\!\subset\!L$ is a neighborhood of the zero section
in $L\!\lra\!\cN_{d_1,d_2;I^+,J,I^-}(\bfH)$.
If $k\!\ge\!4$, let
$$f_{1234}\!:\ov\fM_k^{\R}(d)\lra\ov\cM_{0,4}$$
be the projection onto the first marked points in the first four conjugate pairs.

\begin{crl}\label{sign_crl}
Let $d,k,n\!\in\!\Z^+$ be such that $k\!\ge\!4$ and 
$\bfH\!=\!(H_1,\ldots,H_k)$ be a general tuple of complex 
linear subspaces of~$\P^{2n-1}$ of complex codimensions $c_1,\ldots,c_k$,
respectively.
Suppose $d_1\!\in\!\Z^{\ge0}$ and $d_2\!\in\!\Z^+$ are such that 
$d\!=\!2d_1\!+\!d_2$ and 
$I^+,I^-,J\!\subset\!\{1,\ldots,k\}$ form a partition
of $\{1,\ldots,k\}$ such~that 
\BE{IJcond_e}
\big|I^+\cap\{1,2,3,4\}\big|=2 \qquad\hbox{or}\qquad
\big|I^-\cap\{1,2,3,4\}\big|=2.\EE
Let $\cN_{d_1,d_2;I^+,J,I^-}$ be oriented as in the paragraph after Lemma~\ref{gluing_lmm}.
\begin{enumerate}[label=(\arabic*),leftmargin=*]
\item If $J\!\cap\!\{1,2,3,4\}\!\neq\!\eset$, the sequence 
\BE{ses_e}0\lra T\big(\cN_{d_1,d_2;I^+,J,I^-}(\bfH) \big)
\lra T\big(\ov\fM_d^{\R}(\bfH)\big)\big|_{\cN_{d_1,d_2;I^+,J,I^-}(\bfH)}
\stackrel{\tnd f_{1234}}{\lra} f_{1234}^*T\ov\cM_{0,4}\lra0\EE
of vector bundles over $\cN_{d_1,d_2;I^+,J,I^-}(\bfH)$ is exact;
it is compatible with the canonical orientations if and only~if 
\BE{signcases_e}
\big|\{i\!\in\!I^-\!:\,c_i\!\in\!2\Z\big\}|+nd_1~
\begin{cases}
\in2\Z,&\hbox{if}~|I^+\!\cap\!\{1,2,3,4\}|\!=\!2;\\
\not\in2\Z,&\hbox{if}~|I^-\!\cap\!\{1,2,3,4\}|\!=\!2.
\end{cases}\EE
\item If $J\!\cap\!\{1,2,3,4\}\!=\!\eset$, the image of
 a fiber of $U\!\lra\!\cN_{d_1,d_2;I^+,J,I^-}(\bfH)$ 
under $f_{1234}\!\circ\!\Phi_{\bfH}$
is of real dimension~1.
\end{enumerate}
\end{crl}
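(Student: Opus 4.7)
The plan is to extract local coordinates on $\ov\fM_d^\R(\bfH)$ near a point $u\in\cN_{d_1,d_2;I^+,J,I^-}(\bfH)$ from the explicit gluing map and to compute the cross-ratio in them. Parametrize the three components of the domain of $u$ by $\P^1$-coordinates $w_1$ (upper bubble), $w$ (central), $w_2$ (lower bubble), model the upper-central node by $w_1 w=v_1$ and the central-lower node by $w w_2=v_2$, and restrict to the real slice $(v_1,v_2)=(\upsilon,\bar\upsilon)$ imposed by the real structure. On the glued $\P^1$, a marked point at $w_1=a$ becomes $w=v_1/a$, a marked point at $w=c$ is unchanged, and a marked point at $w_2=d$ becomes $w=d/v_2$; then $f_{1234}\circ\Phi_\bfH$ is the cross-ratio $CR(v_1,v_2)$ of the four positions of $z_1^+,\ldots,z_4^+$ so produced.

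For part (2), all four points lie on the bubbles (two on each), so every factor in $CR$ is of the form $v_1 v_2-\ast$ (after clearing denominators), and $CR$ depends on $(v_1,v_2)$ only through the product $v_1 v_2=|\upsilon|^2$; hence the image of $f_{1234}\circ\Phi_\bfH$ on a fiber of $L$ is the 1-dimensional image of $\upsilon\mapsto|\upsilon|^2$. For part (1), the assumption $J\cap\{1,2,3,4\}\neq\eset$ places at least one of the four points on the central component, and a direct differentiation of $CR$ at the origin shows $\partial_{v_2}CR|_0=0$ when $|I^+\cap\{1,2,3,4\}|=2$ and $\partial_{v_1}CR|_0=0$ when $|I^-\cap\{1,2,3,4\}|=2$, with the remaining partial generically nonzero. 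Therefore $\upsilon\mapsto CR(\upsilon,\bar\upsilon)$ is holomorphic in the first case and antiholomorphic in the second, and in either case a local $\R$-diffeomorphism; this yields the surjectivity of $\tnd f_{1234}$ on the fiber of $L$ and hence the exactness of~\eqref{ses_e}.

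To deduce the sign condition~\eqref{signcases_e}, combine three sign contributions. First work with $\ov\fM_d^\R(\bfH)_{I^-}$ in place of $\ov\fM_d^\R(\bfH)_\eset$: under the $I^-$-orientation, $\ev_{I^-}$ evaluates each marked point on the component it actually sits on (namely $z_i^+$ on the upper bubble for $i\in I^+$, $z_i^-$ on the upper bubble for $i\in I^-$, and $z_j^+$ on the central component for $j\in J$), so the bubble-decomposition orientation of $\cN_{d_1,d_2;I^+,J,I^-}(\bfH)$ as a fiber product $\fM^\C\times_{\ev_0}\fM^\R$ (with constraints $H_i$ for $i\in I^+$ and $\ov{H_i}$ for $i\in I^-$ on the complex factor and $H_j$, $j\in J$, on the real factor) is the one naturally compatible with the $I^-$-orientation. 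Applied in this marked constrained setting, Lemma~\ref{gluing_lmm} gives that $\Phi_\bfH$ restricted to a fiber of $L$ is orientation-preserving relative to the $I^-$-orientation if and only if $nd_1\in 2\Z$. Lemma~\ref{orientcomp_lmm3} then contributes an additional factor of $(-1)^{|\{i\in I^-:c_i\in 2\Z\}|}$ upon converting back to the $\eset$-orientation. Finally, the map $\upsilon\mapsto CR(\upsilon,\bar\upsilon)$ contributes $+1$ when holomorphic and $-1$ when antiholomorphic. Requiring the product of these three signs to equal $+1$ is precisely~\eqref{signcases_e}.

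The main obstacle will be the sign bookkeeping in the last step: verifying that Lemma~\ref{gluing_lmm} extends cleanly to the marked constrained setting under the $I^-$-orientation (so that no additional sign is introduced by the added marked points and constraints) and that the bubble-decomposition orientation on $\cN_{d_1,d_2;I^+,J,I^-}(\bfH)$ is identified with the one induced through the fiber of $L$ via $\Phi_\bfH$. Once these orientation identifications are confirmed, the cross-ratio computation itself is routine, but the interplay between the antiholomorphic involution $\eta_{2n}$, the choice of distinguished bubble, and the multiple $I$-orientations makes the signs easy to mishandle without tracking each convention explicitly.
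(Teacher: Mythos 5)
Your proposal is correct and follows essentially the same route as the paper's own proof: you apply Lemma~\ref{gluing_lmm} with the $I^-$-orientation to handle the gluing sign, determine whether the cross-ratio map is holomorphic or antiholomorphic depending on which conjugate bubble carries two of the first four marked points, and then convert to the $\eset$-orientation via Lemma~\ref{orientcomp_lmm3}; for part~(2) the paper likewise writes $f_{1234}$ locally as $(\ups,\ups')\mapsto a\ups\ups'$ so that the restriction to the real slice becomes $\ups\mapsto a\ups\bar\ups$. The only difference is cosmetic: you carry out the cross-ratio computation in explicit gluing coordinates, where the paper argues abstractly about the differential on $L\oplus L'$ (and your local model has a small convention slip, $w=d/v_2$ instead of $w=v_2/d$, which does not affect the conclusion).
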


\begin{rmk}
The requirement \eref{IJcond_e} insures that $f_{1234}$ is constant along 
$\cN_{d_1,d_2;I^+,J,I^-}(\bfH)$ and so the composition of the two arrows
in~\eref{ses_e} is trivial.
The conclusion of Corollary~\ref{sign_crl} is compatible with 
changing the distinguisged conjugate component in the paragraph after 
Lemma~\ref{gluing_lmm}
(which interchanges $I^+$ and~$I^-$ and thus the two cases on the right-hand side
of~\eref{signcases_e}) for the following reason.
Let 
$$\ov\fM_{d_1}^{\C}(\bfH;I^+,I^-)=
\big\{u\!\in\!\ov\fM_{\{0\}\sqcup I^+\sqcup I^-}^{\C}(d_1)\!:~
\ev_i(u)\!\in\!H_i~\forall~i\!\in\!I^+,~\ev_i(u)\!\in\!\ov{H_i}~\forall~i\!\in\!I^-
\big\}.$$
The space $\cN_{d_1,d_2;I^+,J,I^-}(\bfH)$ is oriented as the preimage of 
the cycle
$$\ev_0\!:\ov\fM_{d_1}^{\C}(\bfH;I^+,I^-)\lra \P^{2n-1}$$
by the evaluation map at the marked point of $\fM_{\{0\}\sqcup J}^{\R}(d_2)$
corresponding to the chosen node.
Interchanging $I^+$ and~$I^-$ replaces this cycle and the evaluation map
with their conjugates, as  before Lemma~\ref{orientcomp_lmm3}.
If the cardinalities of the sets $\{i\!\in\!I^{\pm}\!:\,c_i\!\in\!2\Z\}$ are 
of the same parity, the complex dimension~of $\ov\fM_{d_1}^{\C}(\bfH;I^+,I^-)$ 
is odd and so the codimension of the cycle~$\ev_0$ above is even.
By the same argument as in the proof of Lemma~\ref{orientcomp_lmm3},
the orientation of $\cN_{d_1,d_2;I^+,J,I^-}(\bfH)$ thus changes,
as expected from the change in the validity of~\eref{signcases_e} in this case. 
If the cardinalities of the sets $\{i\!\in\!I^{\pm}\!:\,c_i\!\in\!2\Z\}$ are 
of different parities, the codimension of the cycle~$\ev_0$ above is odd.
Interchanging $I^+$ and~$I^-$  then does not change 
the orientation of $\cN_{d_1,d_2;I^+,J,I^-}(\bfH)$, 
as expected from the validity of~\eref{signcases_e} not changing in this case.
\end{rmk}

\begin{proof}[{\bf\emph{Proof of Corollary~\ref{sign_crl}}}]
(1) By Lemma~\ref{gluing_lmm}, the gluing map 
$$\Phi_{\bfH}\!: U_{\bfH}\lra   \ov\fM_d^{\R}(\bfH)_{I^-}$$
is orientation-preserving if and only if $nd_1$ is even.
The differential 
\BE{fgldiff_e3} \tnd(f_{1234}\!\circ\!\Phi_{\bfH})\!: L\lra 
\big\{f_{1234}\!\circ\!\Phi_{\bfH}\big\}^*T\cM_{0,4}\EE
is the composition of the differential for smoothing the nodes in 
$\ov\fM_k^{\C}(d)$,
$$\tnd(f_{1234}\!\circ\!\Phi^{\C})\!: L\!\oplus\!L'\lra 
\big\{f_{1234}\!\circ\!\Phi^{\C}\big\}^*T\cM_{0,4}\,,$$
where $L'$ is the analogue of $L$ for the second node, with the embedding
$$L\lra L\oplus L', \qquad \ups\lra \big(\ups,\tnd\eta_u(\ups)\big);$$
see the last part of Section~\ref{lmmpf_sec}.
The restriction of the latter differential to the component, $L$ or~$L'$,
corresponding to the node separating off two of the marked points~$\{1,2,3,4\}$
is a $\C$-linear isomorphism, while the restriction to the other component is trivial.
If $|I^+\!\cap\!\{1,2,3,4\}|\!=\!2$, the former component is~$L$ and
\eref{fgldiff_e3} is an orientation-preserving map.
If $|I^-\!\cap\!\{1,2,3,4\}|\!=\!2$,  the former component is~$L'$ and
\eref{fgldiff_e3} is an orientation-reversing map.
Combining these two observations, we find~that 
the sequence 
$$0\lra T\big(\cN_{d_1,d_2;I^+,J,I^-}(\bfH) \big)
\lra T\big(\ov\fM_d^{\R}(\bfH)_{I^-}\big)\big|_{\cN_{d_1,d_2;I^+,J,I^-}(\bfH)}
\stackrel{\tnd f_{1234}}{\lra} f_{1234}^*T\ov\cM_{0,4}\lra0$$
of vector bundles over $\cN_{d_1,d_2;I^+,J,I^-}(\bfH)$ is exact;
it is compatible with the orientations if and only~if 
$$nd_1~\begin{cases}
\in2\Z,&\hbox{if}~|I^+\!\cap\!\{1,2,3,4\}|\!=\!2;\\
\not\in2\Z,&\hbox{if}~|I^-\!\cap\!\{1,2,3,4\}|\!=\!2.
\end{cases}$$
Combining this with Lemma~\ref{orientcomp_lmm3}, we obtain the first claim of 
Corollary~\ref{sign_crl}.\\

\noindent
(2) If $J\!\cap\!\{1,2,3,4\}\!=\!\eset$, the morphism 
$$f_{1234}\!:\ov\fM_{2k}^{\C}(d)\lra \ov\cM_{0,4}$$
is locally of the form 
$$L\oplus L'\lra \ov\cM_{0,4}, \qquad (\ups,\ups')\lra a\ups\ups',$$
for some $a$ dependent only on~$\cN_{d_1,d_2}$.
Thus, the restriction of~$f$ to $\ov\fM_k^{\R}(d)$ is locally of 
the~form
$$L\lra \ov\cM_{0,4}, \qquad \ups\lra a\ups\bar\ups,$$
which implies the last claim of Corollary~\ref{sign_crl}.
\end{proof}

\section{Comparison of orientations}
\label{lmmpf_sec}

\noindent
We now verify Lemma~\ref{gluing_lmm} by explicitly describing and comparing
the relevant orientations.
This argument is fundamentally different from the proof of 
\cite[Lemma~5.1]{GZ4}.\\

\noindent
Let $\Si$ be the nodal surface consisting of three components: 
\begin{enumerate}[label=(\arabic*),leftmargin=*]
\item $\Si_0\!=\!\P^1$ with nodes at $[c,1]$ and $[1,-c']$ for some $c,c'\!\in\!\C^*$
with $cc'\!\neq\!-1$;
\item $\Si^+\!=\!\P^1$ with the node at $[1,0]$, which is joined to $\Si_0$ at~$[c,1]$, and
\item $\Si^-\!=\!\P^1$ with the node at $[0,1]$, which is joined to $\Si_0$ at~$[1,-c']$. 
\end{enumerate}
A holomorphic map $u\!:\Si\!\lra\!\P^{m-1}$ corresponds to three maps:
\begin{enumerate}[label=(\arabic*),leftmargin=*]
\item $u_0\!:\P^1\!\lra\!\P^{m-1}$ typically given~by
$$[x,y]\lra  \bigg[A_1\!\prod_{r=1}^{d_0}(x\!-\!a_{1;r}y),\ldots,
A_m\!\!\prod_{r=1}^{d_0}(x\!-\!a_{m;r}y)\bigg],$$
\item $u^+\!:\P^1\!\lra\!\P^{m-1}$ typically given~by
$$[x,y]\lra \bigg[B_1\!\prod_{r=1}^{d^+}(x\!-\!b_{1;r}y),\ldots,
B_m\!\!\prod_{r=1}^{d^+}(x\!-\!b_{m;r}y)\bigg],$$
\item $u^-\!:\P^1\!\lra\!\P^{m-1}$ typically given~by
$$[x,y]\lra  \bigg[B_1'\!\prod_{r=1}^{d^-}(b_{1;r}'x\!+\!y),\ldots,
B_m'\!\!\prod_{r=1}^{d^-}(b_{m;r}'x\!+\!y)\bigg],$$
\end{enumerate}
for some $A_i,B_i,B_i',a_{i;r},b_{i;r},b_{i;r}'\!\in\!\C^*$ such that
\begin{gather*}
\bigcap_{r=1}^m\{a_{i;r}\!:\,r\!=\!1,\ldots,d_0\},
\bigcap_{r=1}^m\{b_{i;r}\!:\,r\!=\!1,\ldots,d^+\},
\bigcap_{r=1}^m\{b_{i;r}'\!:\,r\!=\!1,\ldots,d^-\}=\eset,\\
\begin{split}
[B_1,\ldots,B_m]&=\bigg[A_1\!\prod_{r=1}^{d_0}(c\!-\!a_{1;r}),\ldots,
A_m\!\!\prod_{r=1}^{d_0}(c\!-\!a_{m;r})\bigg],\qquad\hbox{and}\\
[B_1',\ldots,B_m']&=
\bigg[A_1\!\prod_{r=1}^{d_0}(1\!+\!a_{1;r}c'),\ldots,A_m\!\!\prod_{r=1}^{d_0}(1\!+\!a_{m;r}c')
\bigg].
\end{split}
\end{gather*}
The intersection conditions above are equivalent to the condition that  
the polynomials in each of the three sets describing~$u_0,u^+,u^-$ have no common factor;
the other two conditions are equivalent to 
$u^+([1,0])\!=\!u_0([c,1])$ and $u^-([0,1])\!=\!u_0([1,-c'])$.\\

\noindent
Deformations of maps of the form~$u$ above are described~by the holomorphic maps
\begin{equation*}\begin{split}
[x,y]&\lra  \bigg[A_1\!\prod_{r=1}^{d_0}(x\!-\!a_{1;r}y)
\prod_{r=1}^{d^+}\!\big(x\!-\!(c\!+\!b_{1;r}\ups)y\big)
\prod_{r=1}^{d^-}\!\big((c'\!+\!b_{1;r}'\ups')x\!+\!y\big),\ldots,\\
&\hspace{1.5in}
A_m\!\!\prod_{r=1}^{d_0}(x\!-\!a_{m;r}y)
\prod_{r=1}^{d^+}\!\big(x\!-\!(c\!+\!b_{m;r}\ups)y\big)
\prod_{r=1}^{d^-}\!\big((c'\!+\!b_{m;r}'\ups')x\!+\!y\big)\bigg],
\end{split}\end{equation*}
with $\ups,\ups'\!\in\!\C^*$ corresponding to the smoothings of the two nodes.\\

\noindent
We next take $m\!=\!2n$.
For $d\!\in\!\Z^+$, let
\begin{equation*}\begin{split}
\De_{n,d}^{\eta}&=\bigg\{
\big([a_{1;1},\ldots,a_{1;d}],\ldots,[a_{n;1},\ldots,a_{n;d}])\!\in\!
(\tn{Sym}^d\C)^n\!:\\
&\hspace{1in}
\bigcap_{r=1}^n\{a_{i;r}\!:\,r\!=\!1,\ldots,d\}\cap
\bigcap_{r=1}^n\{-1/\ov{a_{i;r}}\!:\,r\!=\!1,\ldots,d\}=\eset \bigg\}.
\end{split}\end{equation*}
A typical $(\eta_{2n},\eta)$-real degree $d_0\!=\!d_2$ holomorphic map
$u^{\R}\!\equiv\!u_0$ from~$\P^1$ to~$\P^{2n-1}$ is of the~form
$$[x,y]\lra \bigg[A_1\!\!\prod_{r=1}^{d_2}\!(x\!-\!a_{1;r}y),
\bar{A}_1\!\!\prod_{r=1}^{d_2}\!(\ov{a_{1;r}}x\!+\!y),\ldots, 
A_n\!\!\prod_{r=1}^{d_2}\!(x\!-\!a_{n;r}y),
\bar{A}_n\!\!\prod_{r=1}^{d_2}\!(\ov{a_{n;r}}x\!+\!y)\bigg]$$
for some $A_i,a_{i;r}\!\in\!\C^*$ with
$$\big([a_{1;1},\ldots,a_{1;d_2}],\ldots,[a_{n;1},\ldots,a_{n;d_2}])\!\in\!
(\tn{Sym}^{d_2}\C)^n-\De_{n,d_2}^{\eta}\,.$$
If $u^{\C}\!\equiv\!u^+$ is described as in the first paragraph of this section with
$d^+\!=\!d_1$, $u^-\!=\!\eta_{2n}\!\circ\!u^{\C}\!\circ\!\eta$
is given~by
$$[x,y]\lra
\bigg[-\ov{B_2}\!\prod_{r=1}^{d_1}(\ov{b_{2;r}}x\!+\!y),
\ov{B_1}\!\prod_{r=1}^{d_1}(\ov{b_{1;r}}x\!+\!y), \ldots,
-\ov{B_{2n}}\!\prod_{r=1}^{d_1}(\ov{b_{2n;r}}x\!+\!y),
\ov{B_{2n-1}}\!\prod_{r=1}^{d_1}(\ov{b_{2n-1;r}}x\!+\!y)\bigg].$$
The resulting map $u\!:\Si\!\lra\!\P^{2n-1}$ is $(\eta_{2n},\eta)$-real if $c'\!=\!\bar{c}$. 
In such a case, the restriction of the gluing map~\eref{Phidfn_e3} to an open subspace of~$U$
can be taken to~be
\begin{equation*}\begin{split}
\ups&\stackrel{\Phi}{\lra} \bigg[A_1\!\!\prod_{r=1}^{d_2}\!(x\!-\!a_{1;r}y)
\prod_{r=1}^{d_1}\!\big( (x\!-\!(c\!+\!b_{1;r}\ups)y)
\big(\ov{(c\!+\!b_{2;r})\ups}\,x\!+\!y\big)\big),\\
&\hspace{.45in}
\bar{A}_1\!\!\prod_{r=1}^{d_2}\!(\ov{a_{1;r}}x\!+\!y)
\prod_{r=1}^{d_1}\!\big( (x\!-\!(c\!+\!b_{2;r}\ups)y)
\big(\ov{(c\!+\!b_{1;r}\ups)}\,x\!+\!y\big)\big),\ldots\bigg],
\end{split}\end{equation*}
with $\ups\!\in\!\C^*$ corresponding to an element of~$L$
(based on the complex case in the previous paragraph).\\

\noindent
As explained in \cite[Section~2.1]{Teh}, an orientation on 
$\fM_0^{\R}(d)$ is equivalent 
to an orientation on the space $\wt\fM_0^{\R}(d)$ of parametrized real maps.
The latter is determined by the~map
\begin{gather*}
\big((\tn{Sym}^d\C)^n-\De_{n,d}^{\eta}\big)\!\times \R\P^{2n-1}\lra\wt\fM_0^{\R}(d)\,,\\
\begin{split}
&\big([a_{1;1},\ldots,a_{1;d}],\ldots,[a_{n;1},\ldots,a_{n;d}],[A_1,\ldots,A_n]\big)\lra\\
&\hspace{.5in}\bigg[A_1\!\prod_{r=1}^d\!(x\!-\!a_{1;r}y),
\bar{A}_1\!\prod_{r=1}^d\!(\ov{a_{1;r}}x\!+\!y),\ldots, 
A_n\!\prod_{r=1}^d\!(x\!-\!a_{n;r}y),
\bar{A}_n\!\prod_{r=1}^d\!(\ov{a_{n;r}}x\!+\!y)\bigg],
\end{split}\end{gather*}
where $\R\P^{2n-1}\!\equiv\!(\C^n\!-\!\{0\})/\R^*$ and $[x,y]\!\in\!\P^1$. 
This map is an isomorphism over the open subset
of~$\wt\fM_0^{\R}(d)$ consisting of maps~$u$ such that 
$u([1,0])$ does not lie in any of the coordinate subspaces of~$\P^{2n-1}$;
see \cite[Section~5.2]{Teh}.\\

\noindent
For $c\!\in\!\C^*$ as above, $i\!=\!1,2$, and 
$b\!\in\!\C^*$ with $|b|\!<\!|c|$, let
$$h_{c;i}(b)=\begin{cases}c\!+\!b,&\hbox{if}~i\!\not\in\!2\Z;\\
\ov{(c\!+\!b)}^{\,-1},&\hbox{if}~i\!\in\!2\Z.
\end{cases}$$
The explicit gluing map~$\Phi$ described above locally corresponds to the~map
$$\wt\Phi\!=\!(\ti\Phi_1,\ti\Phi_2)\!: 
\big(\C^{d_1})^{2n}\times (\C^{d_2})^n\times\R\P^{2n-1} \lra  (\C^d)^n\times\R\P^{2n-1}\,, $$
where $d\!=\!2d_1\!+\!d_2$, given~by
\begin{equation*}\begin{split}
\wt\Phi_{1;i;r}
\big((b_{j;s})_{\begin{subarray}{l}j\le2n\\ s\le d_1\end{subarray}},
(a_{j;s})_{\begin{subarray}{l}j\le n\\ s\le d_2\end{subarray}},
[A_1,\ldots,A_n]\big)&=
\begin{cases}
h_{c;1}(b_{2i-1;(r+1)/2}),&\hbox{if}~r\!\le\!2d_1,~r\!\not\in\!2\Z;\\
h_{c;2}(b_{2i;r/2}),&\hbox{if}~r\!\le\!2d_1,~r\!\in\!2\Z;\\
a_{i;r-2d_1},&\hbox{if}~r\!>\!2d_1;
\end{cases}\\
\wt\Phi_{2;i}
\big((b_{j;s})_{\begin{subarray}{l}j\le2n\\ s\le d_1\end{subarray}},
(a_{j;s})_{\begin{subarray}{l}j\le n\\ s\le d_2\end{subarray}},
[A_1,\ldots,A_n]\big)&=A_i\bigg/\prod_{r=1}^{d_1}h_{c;2}(b_{2i;r}).
\end{split}\end{equation*}
The sign of $\wt\Phi$ is $(-1)^{nd_1}$, which establishes Lemma~\ref{gluing_lmm}.\\

\vspace{.4in}

\vbox{
\noindent
{\it Department of Mathematics, Princeton University, Princeton, NJ 08544}\\
{\it Current address: Institut de Math\'ematiques de Jussieu - Paris Rive Gauche,
Universit\'e Pierre et Marie Curie,  4~Place Jussieu,
75252 Paris Cedex 5, France}\\

\noindent
{\it Department of Mathematics, Stony Brook University, Stony Brook, NY 11794\\
azinger@math.stonybrook.edu}}


\begin{thebibliography}{99}

\bibitem{ABL} A.~Arroyo, E.~Brugall\'e, and L.~L\'opez de Medrano, 
{\it Recursive formulas for Welschinger invariants of the projective plane}, 
IMRN (2011), no.~5, 1107–-1134

\bibitem{Cho} C.-H.~Cho, 
\emph{Counting real J-holomorphic discs and spheres in dimension four and six}, 
J.~Korean Math.~Soc.~45 (2008), no.~5, 1427–-1442

\bibitem{Teh} M.~Farajzadeh Tehrani,
\emph{Counting genus zero real curves in symplectic manifolds},
math/1205.1809, to appear in Geom.~Top.


\bibitem{GM} A.~Gathmann and H.~Markwig, 
{\it The Caporaso-Harris formula and plane relative Gromov-Witten invariants 
in tropical geometry}, Math.~Ann.~338 (2007), no.~4, 845–-868

\bibitem{Ge2} P.~Georgieva,
\emph{Open Gromov-Witten invariants in the presence of an anti-symplectic involution},
math/1306.5019 

\bibitem{GZ1} P.~Georgieva and A.~Zinger,
\emph{The moduli space of maps with crosscaps:
Fredholm theory and orientability},
Comm.~Anal.~Geom.~23 (2015), no.~3, 81--140

\bibitem{GZ4} P.~Georgieva and A.~Zinger,
\emph{Enumeration of real curves in $\C\P^{2n-1}$ and
a WDVV relation for real Gromov-Witten invariants}, math/1309.4079

\bibitem{IKS13a} I.~Itenberg, V.~Kharlamov, and E.~Shustin, 
{\it Welschinger invariants of small non-toric Del Pezzo surfaces}, 
J.~EMS 15 (2013), no.~2, 539–-594

\bibitem{IKS13b} I.~Itenberg, V.~Kharlamov, and E.~Shustin, 
{\it Welschinger invariants of real del Pezzo surfaces of degree $\ge3$}, 
Math.~Ann.~355 (2013), no.~3, 849–-878


\bibitem{MirSym} K.~Hori, S.~Katz, A.~Klemm, R.~Pandharipande,
R.~Thomas, C.~Vafa, R.~Vakil, and E.~Zaslow, 
\emph{Mirror Symmetry}, Clay Math.~Inst., AMS~2003

\bibitem{KM} M.~Kontsevich and Y.~Manin, 
\emph{Gromov-Witten classes, quantum cohomology, and enumerative geometry}, 
Comm.~Math.~Phys.~164 (1994), no.~3, 525–-562


\bibitem{PSW} R.~Pandharipande, J.~Solomon, and J.~Walcher,
\emph{Disk enumeration on the quintic 3-fold}, 
J.~Amer.~Math.~Soc. 21 (2008), no.~4, 1169--1209

\bibitem{RT} Y.~Ruan and G.~Tian, 
\emph{A mathematical theory of quantum cohomology}, 
J.~Differential Geom.~42 (1995), no.~2, 259–-367

\bibitem{Sol} J.~Solomon,  
\emph{Intersection theory on the moduli space of holomorphic curves with 
Lagrangian boundary conditions}, math/0606429

\bibitem{Sol2} J.~Solomon, 
{\it A differential equation for the open Gromov-Witten potential},
pre-print 2007

\bibitem{Wel1} J.-Y.~Welschinger, 
\emph{Invariants of real symplectic 4-manifolds and lower bounds in real enumerative geometry}, 
Invent.~Math.~162 (2005), no.~1, 195–-234

\bibitem{Wel2} J.-Y.~Welschinger,  
\emph{Spinor states of real rational curves in real algebraic convex 3-manifolds 
and enumerative invariants},
Duke Math.~J.~127 (2005), no.~1, 89–-121


\end{thebibliography}
\end{document}